\newcommand{\pu}{p^\uparrow}
\newcommand{\bv}{\bigvee}
\newcommand{\bw}{\bigwedge}
\newcommand{\bN}{\mathbb N}
\theoremstyle{plain}
\newtheorem{thm}{Theorem}[section]
\newtheorem{prop}[thm]{Proposition}
\newtheorem{cor}[thm]{Corollary}
\newtheorem{lemma}[thm]{Lemma}
\newtheorem{ex}[thm]{Example}
\theoremstyle{definition}
\newtheorem{defn}[thm]{Definition}
\title{Recursive axiomatizations for representable posets}
\author{Rob Egrot}
\address{Faculty of ICT, Mahidol University, 999 Phutthamonthon Sai 4 Rd, Salaya, Nakhon Pathom 73170, Thailand}
\email{robert.egr@mahidol.ac.th}
\keywords{Poset, partially ordered set, poset representation, recursive axiomatization, axioms from games}
\subjclass[2010]{03C98, 06A11}
\date{}
\begin{document}

\begin{abstract}
We use model theoretic techniques and games to construct explicit first-order axiomatizations for the classes of posets that can be represented as systems of sets, where the order relation is given by inclusion, and existing meets and joins of specified countable cardinalities correspond to intersections and unions respectively.      
\end{abstract}

\maketitle

\section{Introduction}

Assuming the Axiom of Choice, or the weaker Prime Ideal Theorem, a lattice is isomorphic to a sublattice of a powerset algebra if and only if it is distributive. If $S$ is a (meet) semilattice, it is always possible to embed $S$ into a powerset algebra via a semilattice embedding preserving arbitrary meets. For example, we can just let $X$ be the set of all down-sets of $S$, considered as a complete lattice, and take $h:S\to X$ defined by $h(a)=a^\downarrow$, where $a^\downarrow=\{b\in S:b\leq a\}$.

The situation is more interesting if, in addition to the semilattice structure, we also wish to preserve some or all of the joins existing in $S$. Balbes \cite{Bal69} showed that, given $n$ with $3\leq n\leq \omega$, a semilattice embedding of $S$ into a powerset algebra that also preserves existing joins of cardinality strictly less than $n$ exists if and only if, for all $k<n$ and $x, y_1,\ldots ,y_k\in S$, whenever 
\[x\wedge (y_1\vee\ldots\vee y_k)\]
exists in $S$,  so too does
\[(x\wedge y_1)\vee\ldots\vee(x\wedge y_k),\]
and the two are equal. This result again requires some form of choice. Call a semilattice with this property $n$-representable. Clearly $n$-representable implies $m$-representable whenever $m\leq n$, but, unlike in the lattice case, the converse does not hold \cite{Kea97}. 

We can generalize to posets as follows. A poset $P$ is $(\alpha,\beta)$-representable if and only if it can be embedded into a powerset algebra via an embedding preserving meets of cardinality strictly less than $\alpha$, and joins of cardinality strictly less than $\beta$. For countable $\alpha$ and $\beta$, the class of $(\alpha,\beta)$-representable posets is known to be elementary \cite[Theorem 4.5]{Egr16}, but the proof does not produce an explicit axiomatization. It is known that the class has a finite axiomatization only in trivial cases \cite{Egr18}. 

In the uncountable case, the class will not be elementary at all, except if either $\alpha$ or $\beta$ is equal to 2, in which case it is just the class of all posets \cite{Egr17a}. In some cases however the associated class is pseudoelementary. A summary of what is known can be found in \cite[figure 2]{Egr17a}.

In this paper we construct explicit first-order axiomatizations for the cases where $\alpha$ and $\beta$ are countable. The idea is to define a two player game such that a winning strategy for a particular player corresponds to the existence of the required representation for a given poset, then to write down first-order sentences corresponding to the existence of such a strategy. We define a game for the case where $\alpha$ and $\beta$ are both equal to 3 in Section \ref{S:game}, and produce the associated axioms in Section \ref{S:ax}. We generalize to other choices of $\alpha$ and $\beta$ in Section \ref{S:gen}. 

To prove the resulting axiomatizations work for uncountable posets we must exploit the fact that first-order axiomatizations are already known to exist. This neat trick has appeared several times in the literature. For a recent example, and a result similar in spirit to the one proved here, see \cite[Section 4]{HirMcL17}, where it is used to produce a recursive axiomatization for representable disjoint union partial algebras. 

The full result as described above requires some form of choice. Section \ref{S:LSC} expands on this, and provides a countable analogue that does not require anything beyond ZF. Also in this section we explain how the known cases of lattices and semilattices relate to the general result for posets.   

\section{$(\alpha,\beta)$-representable posets}\label{S:PosRep}
This section summarizes some basic definition and results. We assume some knowledge of order theory and model theory, in particular ultraproduct constructions. Textbook exposition can be found in \cite{DavPri02} and \cite{Hodg93}. 

Let $\alpha$ and $\beta$ be cardinals greater than or equal to 2.

\begin{defn}
If $P$ and $Q$ are posets, then a monotone map $f:P\to Q$ is an \textup{\textbf{$(\alpha,\beta)$-morphism}} if: 
\begin{enumerate}[(1)]
\item if $S\subseteq P$ with $|S|< \alpha$ and $\bw S$ defined in $P$, then $h(\bw S)= \bigwedge h[S]$, and
\item if $T\subseteq P$ with $|T|< \beta$ and $\bv T$ defined in $P$, then $h(\bv T)= \bigvee h[T]$.
\end{enumerate}
When an $(\alpha,\beta)$-morphism is an order embedding we say it is an \textup{\textbf{$(\alpha,\beta)$-embedding}}. Note that if at least one of $\alpha$ and $\beta$ is strictly greater than $2$ then monotonicity is automatic. If $\alpha=\beta$ we sometimes just say, e.g. $\alpha$-\emph{morphism}. 
\end{defn}

\begin{defn}
A poset is \textup{\textbf{$(\alpha,\beta)$-representable}} if there is a set $X$ and an $(\alpha,\beta)$-embedding $h:P\to \wp(X)$ (where $\wp(X)$ is the power set of $X$, ordered by inclusion and considered as a lattice). If $\alpha=\beta$ we may just say, e.g. $\alpha$-\emph{representable}.
\end{defn}

\begin{defn}
An \textup{\textbf{$(\alpha,\beta)$-filter}} of $P$ is an upset $\Gamma\subseteq P$ such that:
\begin{enumerate}[(1)]
\item if $S\subseteq \Gamma$ with $|S|< \alpha$ and $\bw S$ defined in $P$, then $\bw S\in\Gamma$, and
\item if $T\subseteq P$ with $|T|< \beta$ and $\bv T$ defined in $P$, then $T\cap \Gamma=\emptyset\implies \bv T\notin \Gamma$.
\end{enumerate}
If $\alpha=\beta$ we can just say, e.g. $\alpha$-\emph{filter}.
\end{defn}

\begin{thm}\label{T:rep}
A poset $P$ is $(\alpha,\beta)$-representable if and only if for every $p\not\leq q \in P$ there is an $(\alpha,\beta)$-filter $\Gamma$ with $p\in\Gamma$ and $q\notin \Gamma$.
\end{thm}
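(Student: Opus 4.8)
The statement is a standard "separation/representation" equivalence, and the plan is to prove the two directions separately.

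For the easy direction (⇒): if $P$ is $(\alpha,\beta)$-representable via $h : P \to \wp(X)$, and $p \not\leq q$, then since $h$ is an order embedding $h(p) \not\subseteq h(q)$, so pick $x \in h(p) \setminus h(q)$ and set $\Gamma = \{r \in P : x \in h(r)\}$. Check it's an $(\alpha,\beta)$-filter: it's an upset because $h$ is monotone; it's closed under small meets because $h$ preserves meets of size $<\alpha$ (if $x \in h(r)$ for all $r \in S$ then $x \in \bigcap h[S] = h(\bigwedge S)$); and the join condition holds because $h$ preserves joins of size $<\beta$ (if $x \notin h(r)$ for all $r \in T$ then $x \notin \bigcup h[T] = h(\bigvee T)$).

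For the converse (⇐): assume the separation property. Let $X$ be the set of all $(\alpha,\beta)$-filters of $P$, and define $h : P \to \wp(X)$ by $h(p) = \{\Gamma \in X : p \in \Gamma\}$. Clearly $h$ is monotone (upsets), and $h$ is injective, in fact an order embedding, precisely by the hypothesis: if $p \not\leq q$ there is $\Gamma$ with $p \in \Gamma$, $q \notin \Gamma$, so $h(p) \not\subseteq h(q)$. It remains to verify $h$ preserves existing meets of size $<\alpha$ and joins of size $<\beta$. For meets: if $\bigwedge S$ exists with $|S| < \alpha$, then $h(\bigwedge S) \subseteq \bigcap h[S]$ is automatic from monotonicity, and the reverse inclusion is exactly filter axiom (1) — if $\Gamma$ contains every element of $S$ then it contains $\bigwedge S$. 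For joins: if $\bigvee T$ exists with $|T| < \beta$, then $\bigcup h[T] \subseteq h(\bigvee T)$ is automatic, and the reverse inclusion $h(\bigvee T) \subseteq \bigcup h[T]$ is the contrapositive of filter axiom (2): if $\Gamma \ni \bigvee T$ but $\Gamma$ meets no element of $T$, that contradicts (2).

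I expect the main (minor) obstacle to be bookkeeping around the edge case $\alpha = 2$ or $\beta = 2$, where monotonicity is not automatic and the meet/join conditions are vacuous or nearly so — one should check the argument degrades gracefully (e.g. when $\alpha = 2$ condition (1) is empty, and we only need $h$ to be an order embedding, which the separation hypothesis still delivers). Beyond that, everything is a routine unwinding of definitions, with the only real content being the observation that the filter axioms were rigged precisely to make the two nontrivial inclusions go through.
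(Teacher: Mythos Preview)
Your proposal is correct and follows essentially the same approach as the paper: pull back points of $X$ along $h$ to get separating filters for the forward direction, and take the set of all $(\alpha,\beta)$-filters as the base of the representation for the converse. The paper's proof is considerably terser and leaves the verifications you spell out (that $h^{-1}(x)$ is a filter, that the canonical map preserves the required meets and joins) to the reader, but the content is identical.
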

\begin{proof}
If $P$ is $(\alpha,\beta)$-representable, by $h:P\to \wp(X)$ say, then $\{h^{-1}(x):x\in X\}$ is a separating set of $(\alpha,\beta)$-filters. Conversely, a separating set of $(\alpha,\beta)$-filters provides the base of a representation defined by 
\[h(p)=\{\Gamma\in \wp(P) :\Gamma\text{ is an $(\alpha,\beta)$-filter and }p\in \Gamma\}.\]
\end{proof}

A dual version of this result holds for a suitable concept of ideals. See \cite{Egr16} for a more thorough discussion.

\begin{prop}\label{P:closed}
Let $2\leq\alpha,\beta\leq\omega$. Then the class of $(\alpha,\beta)$-representable posets is closed under the following constructions:
\begin{enumerate}[(1)]
\item Products.\label{prod}
\item Ultraproducts.\label{Uprod}
\item Ultraroots.\label{Uroot}
\end{enumerate}
\end{prop}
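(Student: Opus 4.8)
The plan is to use the characterization from Theorem \ref{T:rep}: a poset is $(\alpha,\beta)$-representable iff it has a separating family of $(\alpha,\beta)$-filters. For each of the three constructions I would build such a family on the output poset from families on the inputs. The main thing to watch throughout is that $\alpha,\beta\leq\omega$, so the sets $S$ and $T$ appearing in the filter conditions are finite; this is what makes the ultraproduct argument go through, since finite meets and joins (and their existence) are expressible by first-order formulas, and {\L}o{\'s}'s theorem applies.

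For products (item \ref{prod}), suppose $P=\prod_{i\in I}P_i$ with each $P_i$ being $(\alpha,\beta)$-representable. Given $p\not\leq q$ in $P$, there is some coordinate $i$ with $p(i)\not\leq q(i)$ in $P_i$, so pick a separating $(\alpha,\beta)$-filter $\Gamma_i$ of $P_i$ with $p(i)\in\Gamma_i$, $q(i)\notin\Gamma_i$, and set $\Gamma=\{r\in P: r(i)\in\Gamma_i\}$, i.e. the preimage of $\Gamma_i$ under the $i$-th projection $\pi_i$. I would check that $\Gamma$ is an $(\alpha,\beta)$-filter: it is clearly an upset; for finite $S\subseteq\Gamma$ with $\bigwedge S$ existing in $P$, meets in products are computed coordinatewise, so $(\bigwedge S)(i)=\bigwedge_{s\in S}s(i)$, which lies in $\Gamma_i$ since $\Gamma_i$ is an $(\alpha,\beta)$-filter; the join condition is dual. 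Since $\Gamma$ separates $p$ from $q$, Theorem \ref{T:rep} gives the result. (One should note coordinatewise computation of existing meets/joins in the product is a standard fact; a small subtlety is that $\bigwedge S$ existing in $P$ forces each $\bigwedge_{s\in S}s(i)$ to exist in $P_i$, which is what we need.)

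For ultraproducts (item \ref{Uprod}) and ultraroots (item \ref{Uroot}), I would invoke the prior knowledge that the class is elementary, namely \cite[Theorem 4.5]{Egr16}: elementary classes are automatically closed under ultraproducts and ultraroots, so strictly these two items follow immediately. However, since the point of the paper is to avoid relying on the non-constructive existence proof, I expect the intended argument is direct. For ultraproducts, let $P=\prod_{i\in I}P_i/U$ with each $P_i$ representable and $U$ an ultrafilter. Given $[p]_U\not\leq[q]_U$, the set $J=\{i: p(i)\not\leq q(i)\}$ lies in $U$; for each $i\in J$ choose a separating $(\alpha,\beta)$-filter $\Gamma_i$, and for $i\notin J$ let $\Gamma_i$ be anything (say $P_i$). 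Define $\Gamma=\{[r]_U : \{i: r(i)\in\Gamma_i\}\in U\}$. This is well defined and an upset; for the filter conditions, given a finite $S=\{[s_1]_U,\dots,[s_k]_U\}\subseteq\Gamma$ whose meet exists in $P$, the statement ``$\bigwedge$ of these $k$ elements exists and equals a given element'' is first-order, so by {\L}o{\'s} the meet is represented coordinatewise on a set in $U$, and on the intersection (still in $U$) with the sets witnessing membership in each $\Gamma_i$ and with $J$, the coordinate meets land in $\Gamma_i$; hence $\bigwedge S\in\Gamma$, and dually for joins. The hardest step is this ultraproduct bookkeeping --- making precise that ``$\bigwedge S$ exists in $P$'' transfers to ``$\bigwedge$ of the coordinates exists in $P_i$ for $U$-many $i$'' using that $|S|<\omega$ --- but it is routine given the finiteness. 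Ultraroots are then handled the same way: if $P^I/U$ is representable, recover a separating family on $P$ by noting $P$ embeds elementarily (hence as an $(\alpha,\beta)$-substructure in the relevant sense) into $P^I/U$ via the diagonal, and restrict the separating filters of the ultrapower; alternatively, cite that the representable class is closed under elementary substructures, which follows from Theorem \ref{T:rep} together with finiteness of the filter conditions. I expect the write-up to simply point to \cite[Theorem 4.5]{Egr16} for (\ref{Uprod}) and (\ref{Uroot}) and give the short direct argument for (\ref{prod}).
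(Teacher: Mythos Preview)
Your proposal is correct and follows essentially the same route as the paper: separate via Theorem~\ref{T:rep} in each case, pulling back a coordinate filter for products, pushing a family of filters through \L o\'s for ultraproducts, and restricting along the diagonal for ultraroots. Two small points of comparison. First, for ultraproducts the paper packages your manual bookkeeping more cleanly by expanding the language with a unary predicate $G$ interpreted as $\Gamma_j$ in each $P_j$; then ``$\{x:G(x)\}$ is an $(\alpha,\beta)$-filter'' is a first-order scheme in the expanded language, and a single application of \L o\'s finishes it, avoiding the intersection-of-sets-in-$U$ chase you sketch. Second, your suggestion to cite \cite[Theorem~4.5]{Egr16} (elementarity) for parts~(\ref{Uprod}) and~(\ref{Uroot}) would be circular: in \cite{Egr16}, as here, elementarity is \emph{deduced} from closure under ultraproducts and ultraroots via Keisler--Shelah. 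The paper instead points to the direct closure results \cite[Propositions~4.2 and~4.4]{Egr16} and supplies the self-contained arguments you outline.
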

\begin{proof}
\begin{enumerate}[(1)]
\item Let $I$ be an indexing set and suppose $P_i$ is an $(\alpha,\beta)$-representable poset for all $i\in I$. Let $x,y\in \prod_I P_i$ and suppose $x\not\leq y$. Then there is $j\in I$ such that $x(j)\not\leq y(j)$. By Theorem \ref{T:rep} there is an $(\alpha,\beta)$-filter, $\Gamma_j$, with $x(j)\in \Gamma_j$ and $y(j)\notin\Gamma_j$. For $i\in I\setminus\{j\}$ define $S_i= P_i$, and define $S_j =\Gamma_j$. It is straightforward to check that $\prod_I S_i$ is an $(\alpha,\beta)$-filter of $\prod_I P_i$ containing $x$ and not $y$, and so the result follows from Theorem \ref{T:rep}. Note that this proof works for all $2\leq\alpha,\beta$, so $\alpha$ and $\beta$ need not be countable. 

\item This is \cite[Proposition 4.2]{Egr16}, but we provide an alternative proof here. Given an indexing set $I$, and $(\alpha,\beta)$-representable posets $P_i$ for each $i\in I$, we let $U$ be a non-trivial ultrafilter of $\wp(I)$ and construct the ultraproduct $\prod_U P_i$. Now, if $[a]$ and $[b]$ are elements of $\prod_U P_i$ with $[a]\not\leq[b]$, then $\{i\in I: a(i)\not\leq b(i)\}$ is equal to $u$ for some $u\in U$. For each $j\in u$ let $\Gamma_j$ be an $(\alpha,\beta)$-filter of $P_j$ with $a(j)\in \Gamma_j$ and $b(j)\notin \Gamma_j$ (using Theorem \ref{T:rep}). We extend the signature of posets by a single unary predicate $G$, and we interpret $G$ in $P_j$ by
\[P_j\models G(x)\iff x\in \Gamma_j\]
when $j\in u$. So, by definition of the ultraproduct, $\prod_U P_i\models G([a])$ and $\prod_U P_i\models \neg G([b])$. Now, for all $j\in u$, the fact that $\{x\in P_j: G(x)\}$ is an $(\alpha,\beta)$-filter can be expressed in first-order logic using the signature of posets plus $G$, so, by \L o\'s Theorem \cite{Los55}, $\{[x]\in\prod_U P_i: G([x])\}$ is also an $(\alpha,\beta)$-filter, and the result follows from Theorem \ref{T:rep}.  

\item This is \cite[Proposition 4.4]{Egr16}, and we can prove it quickly by observing that if $p\not\leq q\in P$, and if $[\bar{p}]$ and $[\bar{q}]$ are the elements of the ultrapower $\prod_U P$ defined by the constant sequences, $\bar{p}(i) = p$ for all $i\in I$, and $\bar{q}(i) = q$ for all $i\in I$ respectively, then an $(\alpha,\beta)$-filter of $\prod_U P$ containing $[\bar{p}]$ and not $[\bar{q}]$ restricts to an $(\alpha,\beta)$-filter of the canonical image of $P$ in $\prod_U P$.  
\end{enumerate}
\end{proof}

\begin{thm}\label{T:elem}
For all $2\leq\alpha,\beta\leq\omega$, the class of $(\alpha,\beta)$-representable posets is elementary.
\end{thm}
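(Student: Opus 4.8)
The plan is to deduce this from Proposition \ref{P:closed} together with a standard model-theoretic characterization of elementary classes. Recall that a class $\mathcal{K}$ of structures in a fixed signature, closed under isomorphism, is elementary --- that is, equal to $\mathrm{Mod}(T)$ for some set $T$ of first-order sentences --- if and only if it is closed under ultraproducts and under elementary equivalence; and by the Keisler--Shelah theorem the second condition may be replaced by the apparently weaker requirement that the complement of $\mathcal{K}$ be closed under ultrapowers. Indeed, if $P\equiv Q$ then $\prod_U P\cong\prod_U Q$ for a suitable ultrafilter $U$; if $P\in\mathcal{K}$ then $\prod_U P\in\mathcal{K}$ by closure under ultraproducts, hence $\prod_U Q\in\mathcal{K}$, and then $Q\in\mathcal{K}$, since otherwise $\prod_U Q$ would be an ultrapower of a member of the complement of $\mathcal{K}$.

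So it is enough to verify these two closure properties for the class $\mathcal{K}$ of $(\alpha,\beta)$-representable posets, regarded inside the class of all structures in the signature of a single binary relation. Closure under ultraproducts is exactly part (\ref{Uprod}) of Proposition \ref{P:closed}. For the complement, a member is either a structure failing one of the finitely many first-order poset axioms --- and this failure is inherited by every ultrapower by \L o\'s's theorem --- or a poset that is not $(\alpha,\beta)$-representable, in which case any ultrapower of it is again a poset, and it cannot be $(\alpha,\beta)$-representable either, since by part (\ref{Uroot}) of Proposition \ref{P:closed} (closure under ultraroots) representability of the ultrapower would force representability of the original poset. Hence the complement of $\mathcal{K}$ is closed under ultrapowers, and the characterization above yields the theorem.

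I do not anticipate a real obstacle in this argument: all the substance sits in Proposition \ref{P:closed}, and in particular in the ultraproduct argument there that adjoins the predicate $G$ naming a separating filter and then invokes \L o\'s's theorem. The only points needing a little attention are that the characterization is applied in the signature of all binary-relation structures rather than relativised to posets --- harmless, since an ultraproduct of posets is a poset --- and that one should use the ZFC form of the Keisler--Shelah theorem so that nothing beyond ZFC (in particular no instance of the continuum hypothesis) is needed. It is worth stressing that the proof is non-constructive: it produces an axiomatization, namely the set of all first-order sentences valid in every $(\alpha,\beta)$-representable poset, but no explicit list, and closing that gap is the purpose of the remainder of the paper.
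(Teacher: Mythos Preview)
Your argument is correct and is essentially the same as the paper's own proof: the paper simply observes that the class is closed under isomorphism, ultraproducts, and ultraroots (Proposition~\ref{P:closed}(\ref{Uprod}),(\ref{Uroot})) and invokes the Keisler--Shelah theorem, which is exactly the characterization you spell out (your ``complement closed under ultrapowers'' is the contrapositive of closure under ultraroots). Your additional care about working in the raw binary-relation signature is a nice technical point that the paper leaves implicit.
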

\begin{proof}
This is \cite[Theorem 4.5]{Egr16}. The class is obviously closed under isomorphism, and since it is closed under ultraproducts and ultraroots, by Proposition \ref{P:closed} parts (2) and (3) respectively, it is elementary by the Keisler-Shelah Theorem \cite{Kei61,She71}.
\end{proof}

\begin{thm}\label{T:fin}
Given $2\leq\alpha,\beta\leq\omega$, the class of $(\alpha,\beta)$-representable posets is finitely axiomatizable if and only if at least one of $\alpha$ and $\beta$ is $2$.
\end{thm}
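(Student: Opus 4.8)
The plan is to prove the two implications separately, the direction ``$\min(\alpha,\beta)=2\Rightarrow$ finitely axiomatizable'' being routine and the converse carrying the real weight.

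For the first direction I would show that if one of $\alpha,\beta$ equals $2$ then \emph{every} poset is $(\alpha,\beta)$-representable, so that the class is axiomatized by the three first-order sentences defining partial orders and is thus finitely axiomatizable. Concretely, if $\beta=2$ and $p\not\le q$ in $P$, then the principal upset $p^\uparrow$ is an $(\alpha,2)$-filter separating $p$ from $q$: it is an upset; the meet of any family of elements of $p^\uparrow$ that exists in $P$ is again above $p$, so condition (1) holds for every cardinality; and condition (2) for $|T|<2$ is vacuous except that $T=\emptyset$ requires $\bot\notin p^\uparrow$, which holds since $p\not\le q$ forces $p\neq\bot$. Theorem \ref{T:rep} now supplies the representation, and the case $\alpha=2$ is dual, using $P\setminus q^\downarrow$ in place of $p^\uparrow$.

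For the converse, assume $3\le\alpha,\beta\le\omega$ and let $\mathcal K$ denote the class of $(\alpha,\beta)$-representable posets. By Theorem \ref{T:elem}, $\mathcal K$ is elementary, and by Proposition \ref{P:closed} it is closed under ultraproducts; hence a finite axiomatization of $\mathcal K$ would force its complement to be elementary too, and so closed under ultraproducts. The plan is to contradict this: I would construct posets $P_n$ ($n<\omega$), none $(\alpha,\beta)$-representable, together with a non-principal ultrafilter $U$ on $\omega$ for which $\prod_U P_n$ is $(\alpha,\beta)$-representable. Then for any single sentence $\varphi$ with $\mathcal K=\mathrm{Mod}(\varphi)$ we have $P_n\models\neg\varphi$ for all $n$, hence $\prod_U P_n\models\neg\varphi$ by \L o\'s's theorem, contradicting $\prod_U P_n\in\mathcal K$. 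Each $P_n$ should carry an ``obstruction of size $n$'': a fixed pair $p_n\not\le q_n$ and a finite configuration of existing meets and joins, in the spirit of the failures of the distributive law underlying Balbes's criterion, rigged so that every upset of $P_n$ that contains $p_n$ and is closed under the prescribed meets and joins is driven, through roughly $n$ successive closure steps, to contain $q_n$; by Theorem \ref{T:rep} this witnesses non-representability of $P_n$. Because the sizes of these obstructions are unbounded, $\prod_U P_n$ contains no finite obstruction of this kind, the forcing of $[q_n]$ into candidate filters breaks down, and a separating family of $(\alpha,\beta)$-filters can be assembled in $\prod_U P_n$; this reasoning, together with the verification that each $P_n$ genuinely admits no separating family, is essentially the content of \cite{Egr18}.

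The step I expect to be the main obstacle is precisely the design and analysis of the family $P_n$: the obstruction must be rigid enough that \emph{no} $(\alpha,\beta)$-filter of $P_n$ separates $p_n$ from $q_n$ — a condition quantifying over all upsets closed under the prescribed meets, hence genuinely global — while remaining fragile enough to dissipate in the limit, so the careful bookkeeping of which meets and joins persist in $\prod_U P_n$ and the explicit construction of separating filters there is where the real work lies.
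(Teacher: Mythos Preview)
Your proposal is correct and follows essentially the same approach as the paper: for the easy direction you use $p^\uparrow$ (when $\beta=2$) and $P\setminus q^\downarrow$ (when $\alpha=2$) together with Theorem~\ref{T:rep}, exactly as the paper does, and for the hard direction both you and the paper defer to \cite{Egr18}. Your sketch of the ultraproduct argument underlying \cite{Egr18} is accurate in spirit and your identification of the main obstacle is on target, but since the paper simply cites \cite{Egr18} outright, there is no substantive difference in strategy to compare.
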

\begin{proof}
If both $\alpha$ and $\beta$ are strictly greater than $2$, then that the class is not finitely axiomatizable is the main result of \cite{Egr18}. Conversely, if $\beta=2$, then given $p\not\leq q\in P$ we can always take $\pu$ as an $(\alpha,\beta)$-filter containing $p$ but not $q$, and, if $\alpha=2$, we can take $P\setminus\{q^\downarrow\}$ for the same purpose. Thus in these cases every poset is $(\alpha,\beta)$-representable, by Theorem \ref{T:rep}. 
\end{proof}

\begin{prop}
The class of $(\alpha,\beta)$-representable posets is not closed under substructures when both $\alpha$ and $\beta$ are strictly greater than $2$.
\end{prop}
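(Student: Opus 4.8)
The plan is to produce one $(\alpha,\beta)$-representable poset $P$ that has a substructure $Q$ — that is, a subset carrying the induced order — which is \emph{not} $(\alpha,\beta)$-representable. Two easy facts drive this. First, for any set $X$ the powerset lattice $\wp(X)$, ordered by inclusion, is $(\alpha,\beta)$-representable for every $\alpha,\beta$: the identity map $\wp(X)\to\wp(X)$ preserves all intersections and unions, so it is an $(\alpha,\beta)$-embedding; alternatively, for $A\not\subseteq B$ the principal ultrafilter $\{C\subseteq X:x\in C\}$ (any $x\in A\setminus B$) is an $(\alpha,\beta)$-filter separating $A$ from $B$, and Theorem~\ref{T:rep} applies. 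Second, every poset $Q$ order-embeds into $\wp(Q)$ by $q\mapsto q^{\downarrow}$, and an order embedding exhibits its domain, up to isomorphism, as a substructure of its codomain. So once we have a non-representable poset $Q$, we may take $P=\wp(Q)$: it is representable, it contains an isomorphic copy of $Q$ as a substructure, and representability is isomorphism-invariant, so that copy is not representable. Hence the class is not closed under substructures.

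It remains to exhibit a poset that is not $(\alpha,\beta)$-representable when $\alpha,\beta>2$. For the countable case this is immediate from Theorem~\ref{T:fin} (a finitely axiomatizable class would be the class of all posets, which it is not), but it is cleaner, and covers all $\alpha,\beta\ge 3$ at once, to give an explicit example. I would take $Q$ to be the $7$-element poset on $\{a,b,x,y_1,y_2,c,d\}$ whose strict order is generated by $a<x$, $b<x$, $a<y_1$, $b<y_2$, $a<d$, $b<d$, $x<c$, $y_1<c$, $y_2<c$. A routine check gives $x\wedge y_1=a$, $x\wedge y_2=b$, $y_1\vee y_2=c$, and $x\wedge c=x$, whereas $a\vee b$ does not exist because $x$ and $d$ are incomparable minimal upper bounds of $\{a,b\}$. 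If $h\colon Q\to\wp(X)$ were an $(\alpha,\beta)$-embedding with $\alpha,\beta\ge 3$ then, since $|\{y_1,y_2\}|,|\{x,y_1\}|,|\{x,y_2\}|,|\{x,c\}|<\min(\alpha,\beta)$, we would get $h(x)=h(x)\cap h(c)=\bigl(h(x)\cap h(y_1)\bigr)\cup\bigl(h(x)\cap h(y_2)\bigr)=h(a)\cup h(b)\subseteq h(d)$, forcing $x\le d$, a contradiction. So $Q$ is not $(\alpha,\beta)$-representable.

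There is no serious obstacle here: the whole content is the contrast between two facts about the embedding $q\mapsto q^{\downarrow}\colon Q\to\wp(Q)$ — its codomain is trivially representable, while its domain need not be — together with the minor task of pinning down a concrete non-representable poset and verifying its one failing meet/join identity. The hypothesis $\alpha,\beta>2$ is used only to guarantee such a poset exists; when $\alpha=2$ or $\beta=2$ every poset is representable (as in the proof of Theorem~\ref{T:fin}) and the closure question is vacuous.
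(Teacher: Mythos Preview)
Your argument is correct and takes a genuinely different route from the paper. The paper proves the proposition by pointing to a single explicit example (Example~\ref{E:subs}): an infinite poset $P$, built by placing an $\omega$-chain above three incomparable atoms $a,b,c$ over a bottom $\bot$ and capping with a top $\top$, which is directly checked to be representable but whose five-element substructure $\{a,b,c,\top,\bot\}$ is the non-distributive diamond $M_3$. Your approach instead factors the problem: you observe once and for all that every poset is a substructure of a powerset lattice (via $q\mapsto q^\downarrow$), that powerset lattices are trivially $(\alpha,\beta)$-representable for all $\alpha,\beta$, and hence that the class fails closure under substructures as soon as \emph{any} non-representable poset exists. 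This is a cleaner reduction, and it makes transparent that the failure is equivalent to the class being proper. The trade-off is that your witness to non-representability is a bespoke seven-element poset requiring several order-theoretic checks, whereas the paper can simply invoke $M_3$ and the classical fact that non-distributive lattices are not representable; you could have shortened your argument by doing the same.
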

\begin{proof}
See Example \ref{E:subs} below.
\end{proof}

\begin{cor}
The class of $(\alpha,\beta)$-representable posets has no universal axiomatization when both $\alpha$ and $\beta$ are strictly greater than $2$.
\end{cor}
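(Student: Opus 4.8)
The plan is to invoke the standard model-theoretic fact that any class of structures defined by a set of universal first-order sentences is closed under substructures, and then to derive the corollary immediately from the preceding proposition.

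First I would recall the preservation argument in the form needed here. If $\Sigma$ is a set of universal sentences in the signature of posets and $M\models\Sigma$, then every substructure (that is, every sub-poset) $N\subseteq M$ also satisfies $\Sigma$: a sentence of the shape $\forall\bar x\,\varphi(\bar x)$ with $\varphi$ quantifier-free holds in $N$ whenever it holds in $M$, since quantifier-free formulas are absolute between a structure and its substructures and the universal quantifier in $N$ ranges over a subset of the elements it ranges over in $M$. So any class with a universal axiomatization is closed under substructures.

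Then I would argue by contradiction. Suppose the class of $(\alpha,\beta)$-representable posets, with $\alpha,\beta>2$, were axiomatized by some set $\Sigma$ of universal sentences. By the previous paragraph the class would then be closed under substructures, contradicting the proposition just proved, whose witness is Example \ref{E:subs}. Hence no such $\Sigma$ exists.

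The corollary carries no real obstacle of its own; all the content sits in the preceding proposition (equivalently, in Example \ref{E:subs}), which exhibits an $(\alpha,\beta)$-representable poset with a sub-poset that is not $(\alpha,\beta)$-representable. One could instead route the argument through the \L o\'s--Tarski theorem — since the class is elementary by Theorem \ref{T:elem}, it admits a universal axiomatization if and only if it is closed under substructures — but the full characterization is not needed, as the trivial direction used above already suffices.
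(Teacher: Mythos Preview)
Your proposal is correct and takes essentially the same approach as the paper: both derive the corollary from the preceding proposition via the fact that universally axiomatizable classes are closed under substructures. The only difference is that the paper cites the full characterization (a class is universal iff closed under isomorphisms, ultraproducts, and substructures), whereas you correctly observe that only the trivial direction is needed.
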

\begin{proof}
A class is universal if and only if it is closed under isomorphisms, ultraproducts and substructures (see e.g. \cite[Theorem 2.20]{BurSan81}).
\end{proof}

Note that, by the easy part of the proof of Theorem \ref{T:fin}, if one or both of $\alpha$ and $\beta$ is $2$, then the class of $(\alpha,\beta)$-representable posets is just the class of all posets, which is universally axiomatized by definition. 

\begin{ex}\label{E:subs}
Let $P$ be the poset in figure \ref{F:subs}. Then it's straightforward to check that $P$ is $(\alpha,\beta)$-representable, by using Theorem \ref{T:rep}, or just by writing down a suitable representation. However, the substructure with base $\{a,b,c,\top,\bot\}$ is order isomorphic to the diamond lattice $M_3$, and so is not distributive. 
\end{ex}

\begin{figure}[htbp]
\centering
\scalebox{0.9}{\xymatrix{ & \bullet_\top\ar@{-}[d]   \\
& \bullet\ar@{-}[d]   \\
& \bullet\ar@{..}[d]  \\
& & &\\
\bullet_a\ar@{..}[ur] & \bullet_b\ar@{..}[u] & \bullet_c\ar@{..}[ul]\\
& \bullet_\bot\ar@{-}[ul]\ar@{-}[u]\ar@{-}[ur] 
}} 
\caption{}
\label{F:subs}
\end{figure}

\section{A game for $3$-representations}\label{S:game}
We define a family of two player games, played over a poset $P$, between players $\forall$ and $\exists$. The games are played in rounds indexed by the natural numbers. In each round, $\forall$ plays a move, then $\exists$ plays a move. The choices $\exists$ makes add elements to a set $U$ whose initial state is dependent on the game. $\exists$ wins an $n$-round game if  $\forall$ does not win till at least round $n+1$. $\exists$ wins an $\omega$-round game if $\forall$ does not win at any point during play. The idea is that, given $p\not\leq q\in P$, and a suitable sequence of moves by $\forall$, a winning play by $\exists$ in one of these games constructs a $3$-filter containing $p$ but not $q$. 

Our games are played as follows. Every game has a \emph{starting position}, taking the form of a pair of sets $U$ and $V$. In every round of the game $\forall$ makes one of the following moves (for $a,b\in P$):
\begin{enumerate}
\item If $b\geq a$, for some $a\in U$, then $\forall$ can play $(b)$.
\item If $a,b\in U$, and $a\wedge b$ is defined in $P$, then $\forall$ can play $(a,b)$.
\item If $a\vee b$ is defined and in $U$, then $\forall$ can play $(a,b)$.
\end{enumerate}

$\exists$ must respond to each move according to the following rules (one for each possible move by $\forall$):

\begin{enumerate}
\item $\exists$ must  add $b$ to $U$.
\item $\exists$ must add $a\wedge b$ to $U$.
\item $\exists$ must choose either $a$ or $b$ and then add it to $U$.
\end{enumerate}

$\forall$ wins in round $n$ if $U\cap V\neq \emptyset$ at the beginning of that round. $\exists$ has an $n$-strategy for the game with starting position $(U,V)$ if she can guarantee that she will not lose this game till at least round $n+1$, however $\forall$ plays. $\exists$ has an $\omega$-strategy for the game with starting position $(U,V)$ if she can guarantee to never lose this game, whatever moves $\forall$ makes. Note that if $U\cap V\neq\emptyset$ in the starting position then $\exists$ will lose in round 0.  

\begin{prop}\label{P:game}
If $P$ is $3$-representable then, for all $p\not\leq q\in P$, $\exists$ has an $\omega$-strategy in the game for $P$ with starting position $(\{p\},\{q\})$. If $P$ is countable then the converse is true. 
\end{prop}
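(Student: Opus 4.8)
The plan is to prove the two directions separately, using Theorem~\ref{T:rep} as the bridge between representability and the existence of separating filters.

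\textbf{Forward direction.} Suppose $P$ is $3$-representable and fix $p\not\leq q\in P$. By Theorem~\ref{T:rep} there is a $3$-filter $\Gamma$ with $p\in\Gamma$ and $q\notin\Gamma$. I would have $\exists$ play the following strategy: maintain the invariant that $U\subseteq\Gamma$ at all times (the initial state $U=\{p\}$ satisfies this since $p\in\Gamma$, and $V=\{q\}$). One checks that each of $\forall$'s three move types, together with $\exists$'s forced or chosen response, preserves $U\subseteq\Gamma$: if $b\geq a$ with $a\in U\subseteq\Gamma$ then $b\in\Gamma$ since $\Gamma$ is an upset; if $a,b\in U\subseteq\Gamma$ and $a\wedge b$ exists then $a\wedge b\in\Gamma$ by the meet-closure condition (1) of a $3$-filter (here the cardinality $2<3$ is used); and if $a\vee b$ exists and lies in $U\subseteq\Gamma$, then by condition (2) of a $3$-filter (in the form: if the join is in $\Gamma$ then the set $\{a,b\}$ meets $\Gamma$) at least one of $a,b$ is in $\Gamma$, so $\exists$ picks that one. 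Since $\Gamma\cap\{q\}=\emptyset$ throughout, $U\cap V=\emptyset$ at the start of every round, so $\exists$ never loses: this is an $\omega$-strategy.

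\textbf{Converse direction.} Now assume $P$ is countable and that $\exists$ has an $\omega$-strategy in the game with starting position $(\{p\},\{q\})$ for each $p\not\leq q$. Fix such a pair. I would let $\forall$ enumerate, across the $\omega$ rounds, every legal move available to him — crucially, since $P$ is countable, the set of all possible moves of types (1)–(3) (which are parametrized by elements of $P$, or pairs from $P$) is countable, so $\forall$ can schedule them so that every move that ever becomes legal is eventually played. Running $\exists$'s $\omega$-strategy against this ``bookkeeping'' play of $\forall$ produces an increasing sequence of finite sets whose union $\Gamma$ I claim is a $3$-filter with $p\in\Gamma$, $q\notin\Gamma$. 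That $q\notin\Gamma$ (indeed $\Gamma\cap\{q\}=\emptyset$) is immediate because $\exists$ never loses. Upward closure follows from move type (1): if $a\in\Gamma$ and $b\geq a$, then at some finite stage $a\in U$, so $(b)$ is a legal move for $\forall$, hence played at some later round, forcing $b$ into $U$. Closure under existing binary meets follows similarly from move type (2). For condition (2) of a $3$-filter, suppose $a\vee b$ exists and $a\vee b\in\Gamma$ but $a,b\notin\Gamma$; then at some stage $a\vee b\in U$, so $\forall$ eventually plays $(a,b)$ under move type (3), and $\exists$ must add $a$ or $b$ to $U$, contradicting $a,b\notin\Gamma$. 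Finally I would observe that a $3$-filter need only handle meets and joins of sets of size $<3$, i.e. of size $2$ (size $0$ and $1$ being trivial: the empty meet is the top element if it exists, but actually for the ``$<3$'' conditions only pairs matter beyond the trivial cases — I would spell out the size $\leq 1$ cases briefly), so these moves suffice. Then Theorem~\ref{T:rep} gives that $P$ is $3$-representable.

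\textbf{Main obstacle.} The only delicate point is the converse: making precise the claim that $\forall$ can, in an $\omega$-round game, enumerate \emph{all} moves that ever become legal, and that the resulting $\Gamma$ genuinely satisfies the filter conditions. Two subtleties need care. First, legality of a move of type (1) or (2) depends on the current state of $U$, which grows over time, so ``all eventually-legal moves'' must be interpreted via a diagonal/dovetailing enumeration (at round $n$, replay the $n$-th move from a fixed enumeration of $P\times\mathbb{N}$-indexed potential moves if it is currently legal, else make any legal move, e.g. re-add $p$); establishing that every move legal at some finite stage is played at some later stage requires a short argument that the dovetailing catches up. Second, one must confirm that $\exists$'s $\omega$-strategy being a genuine strategy means it responds to \emph{this} particular adversary play, which is fine since a strategy responds to arbitrary $\forall$-play. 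Countability of $P$ is exactly what makes the enumeration possible, which is why the converse can fail for uncountable $P$ — and this is flagged in the surrounding text as the reason the full result needs the separately-known elementarity.
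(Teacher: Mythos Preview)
Your proposal is correct and follows essentially the same approach as the paper: in the forward direction $\exists$ plays from a separating $3$-filter $\Gamma$ provided by Theorem~\ref{T:rep}, and in the converse $\forall$ plays a universal schedule so that the resulting $U$ is itself a $3$-filter. The only cosmetic difference is that the paper organizes $\forall$'s schedule via a priority ranking (play the lowest-ranked \emph{good} move, i.e.\ one that actually enlarges $U$), whereas you use a dovetailing enumeration in which each potential move recurs infinitely often; both devices achieve exactly the same thing.
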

\begin{proof}
If $P$ is $3$-representable then, whenever $p\not\leq q\in P$, there is a $3$-filter $\Gamma$ containing $p$ and not $q$ (by Theorem \ref{T:rep}). In this case $\exists$ has an $\omega$-strategy where she only picks elements from $\Gamma$. 

Conversely, suppose $P$ is countable, and suppose that $\exists$ has an $\omega$-strategy for the game with starting position $(\{p\},\{q\})$, whenever $p\not\leq q\in P$. We define a strategy for $\forall$ as follows. Fix an arbitrary 1-1 map $v:P\times P\to \bN$. We say a move $\forall$ can make is \emph{good} if one of the following holds:
\begin{enumerate}
\item The move is type 1 and $b\notin U$.
\item The move is of type 2 and $a\wedge b\notin U$.
\item The move is of type 3 and neither $a$ nor $b$ is in $U$.
\end{enumerate}
We rank the moves that could potentially be played by $\forall$ using the following system: 
\begin{enumerate} 
\item If $(b)$ is a move of type 1 then the rank of $(b)$ is $v(b,b)$.  
\item If $(a,b)$ is a move of type 2 or 3 then the rank of $(a,b)$ is $v(a,b)$.
\end{enumerate}
In each round $\forall$ plays the \emph{good} move with the lowest rank. If no \emph{good} move exists then $\forall$ just plays the type 1 move $(p)$ for the rest of the game. Then the set $U$ constructed during this game is a $3$-filter, because, once a \emph{good} move has become available for $\forall$, either he plays it within a finite number of moves, or it stops being a \emph{good} move after a finite number of rounds. 

Only \emph{good} moves are relevant to the question of whether $U$ is a $3$-filter, so during the course of this game, if $\exists$ plays according to her $\omega$-strategy, every potential obstacle to $U$ being a $3$-filter is removed. Since $U$ contains $p$ and not $q$ by definition, after an appeal to Theorem \ref{T:rep} we are done.
\end{proof}

\section{Axioms for winning games}\label{S:ax} 
We can easily use the signature of posets to write down (universal) formulas $J$ and $M$, with free variables $x$, $y$ and $z$, such that an interpretation of $J(x,y,z)$ holds in $P$ if and only if the interpretation of $z$ is the join of the interpretations of $x$ and $y$, and an interpretation of $M(x,y,z)$ holds in $P$ if and only if the interpretation of $z$ is the meet of the interpretations of $x$ and $y$. 

Given $k\in\omega$, we can also write down a (quantifier free) formula $C_k$, with free variables $(x_1,\ldots,x_k,y)$, such that an interpretation of $C_k(x_1,\ldots,x_k,y)$ holds in $P$ if and only if the interpretation of $y$ is equal to the interpretation of $x_i$ for some $i\in\{1,\ldots,k\}$.  Similarly, we can write down a (quantifier free) formula $D_k$ such that an interpretation of $D_k(x_1,\ldots,x_k,y)$ holds in $P$ if and only if the interpretation of $y$ is distinct from the interpretation of $x_i$ for all $i\in\{1,\ldots,k\}$. So $C_0$ just defines the empty set, and $D_0$ is satisfied by all elements, though this is not significant in the proof. Note also that $D_k = \neg C_k$.

For all $m,n\in\omega$ we use $\vec{x}_m$ to denote the $m$-tuple $(x_1,\dots,x_m)$, and we define formulas $\phi_{mn}$, with $m+1$ free variables, by recursion as follows.

\[\phi_{m0}(\vec{x}_m,y) = D_m(\vec{x}_m,y).\] 
\begin{alignat*}{2}&\phi_{m(n+1)}(\vec{x}_m,y)=\\ 
&\phantom{=} \forall ab\Big(\big((\exists c (C_m(\vec{x}_m,c)\wedge(c\leq a))\rightarrow \phi_{(m+1)n}(\vec{x}_m,a,y)\big)\\
&\phantom{=}\wedge\big((C_m(\vec{x}_m,a)\wedge C_m(\vec{x}_m,b)\wedge\exists cM(a,b,c))\rightarrow \phi_{(m+1)n}(\vec{x}_m,c,y)\big)\\
&\phantom{=}\wedge \big(\exists c(C_m(\vec{x}_m,c)\wedge J(a,b,c))\rightarrow  (\phi_{(m+1)n}(\vec{x}_m,a,y)\vee \phi_{(m+1)n}(\vec{x}_m,b,y))\big)\Big).
\end{alignat*}

\begin{lemma}\label{L:ngames}
Let $P$ be a poset, and let $v$ be an assignment of variables to elements of $P$. Then, for all $m,n\in\omega$, 
\begin{align*}P, v\models \phi_{mn}(\vec{x}_m,y)\iff&\text{ $\exists$ has an $n$-strategy for the game with}\\ &\text{starting position }(\{v(x_1),\ldots,v(x_m)\},\{v(y)\}).
\end{align*}
\end{lemma}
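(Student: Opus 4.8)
The plan is to induct on $n$, proving the stated equivalence for all $m\in\omega$ simultaneously; this is forced on us because $\phi_{m(n+1)}$ is defined in terms of $\phi_{(m+1)n}$, so the index $m$ grows as $n$ shrinks. Throughout, write $U=\{v(x_1),\dots,v(x_m)\}$ and $V=\{v(y)\}$ for the starting position in question.

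For the base case $n=0$ we have $\phi_{m0}(\vec{x}_m,y)=D_m(\vec{x}_m,y)$, so $P,v\models\phi_{m0}$ holds exactly when $v(y)\notin\{v(x_1),\dots,v(x_m)\}$, i.e.\ when $U\cap V=\emptyset$. By definition $\exists$ has a $0$-strategy precisely when $\forall$ does not already win in round $0$, which is exactly the condition $U\cap V=\emptyset$; so the base case is immediate.

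For the inductive step, fix $n$ and assume the equivalence for $n$ at every index $m$. The key observation, read straight off the rules, is a recursive characterization: $\exists$ has an $(n+1)$-strategy from $(U,V)$ if and only if $U\cap V=\emptyset$ and, for every legal move of $\forall$ from $(U,V)$, $\exists$ has a response after which she has an $n$-strategy from the resulting position $(U',V)$; moreover every legal response replaces $U$ by $U$ together with one new element ($b$, or $a\wedge b$, or one of $a,b$, according to the move type). I would then match the three conjuncts of $\phi_{m(n+1)}$ to the three move types: the universally quantified $a$ (resp.\ $a,b$) plays the role of the element(s) named by $\forall$; the auxiliary existential $\exists c(C_m(\vec{x}_m,c)\wedge\cdots)$ in each antecedent expresses exactly the legality condition for that move (some element of $U$ lies below $a$; both $a$ and $b$ lie in $U$; $a\vee b$ is defined and lies in $U$); and the consequent $\phi_{(m+1)n}(\vec{x}_m,a,y)$, resp.\ $\phi_{(m+1)n}(\vec{x}_m,c,y)$, resp.\ $\phi_{(m+1)n}(\vec{x}_m,a,y)\vee\phi_{(m+1)n}(\vec{x}_m,b,y)$, says by the induction hypothesis at index $m+1$ exactly that $\exists$ has an $n$-strategy from the position obtained by adding the appropriate element to $U$. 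Reading these correspondences in both directions yields the $(n+1)$-case.

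The one point needing care is that $\phi_{m(n+1)}$ has no explicit conjunct demanding $U\cap V=\emptyset$; that condition is enforced implicitly. Indeed, if $m\geq 1$ and $v(y)\in\{v(x_1),\dots,v(x_m)\}$, then taking $a$ to be the value of $x_1$ makes the antecedent of the first conjunct true while the consequent $\phi_{(m+1)n}(\vec{x}_m,a,y)$ is false by the induction hypothesis (since $v(y)$ still lies in the enlarged tuple), so $\phi_{m(n+1)}$ fails --- matching the fact that $\exists$, having already lost, has no $(n+1)$-strategy; and when $m=0$ both $U\cap V=\emptyset$ and ``$\forall$ has no legal move'' hold, so both sides are trivially true. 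A minor notational remark: in the second conjunct the bound variable $c$ of $\exists c\,M(a,b,c)$ also occurs in the consequent, but since meets are unique when they exist this is unambiguous, the conjunct being read as ``if $a,b\in U$ and $a\wedge b$ exists then $\phi_{(m+1)n}(\vec{x}_m,a\wedge b,y)$''. I expect no serious obstacle beyond carefully tracking the index shift $m\mapsto m+1$ and checking that each legality condition translates correctly into its antecedent.
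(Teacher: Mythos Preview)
Your proposal is correct and follows essentially the same approach as the paper: induction on $n$ (simultaneously for all $m$), with the base case read off from $D_m$ and the inductive step matching the three conjuncts of $\phi_{m(n+1)}$ to the three move types for $\forall$. The paper's proof is terser and leaves the two points you flag (the implicit enforcement of $U\cap V=\emptyset$ and the scoping of $c$ in the meet clause) to the reader, but the underlying argument is identical.
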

\begin{proof}
Given a tuple of variables $\vec{x}_m=(x_1\ldots,x_m)$, if we abuse notation slightly and define $v[\vec{x}_m]=\{v(x_1),\ldots,v(x_m)\}$, then, by definition, \[P,v\models \phi_{m0}(\vec{x}_m,y)\iff v[\vec{x}_m]\cap \{v(y)\}=\emptyset,\] which is equivalent to saying that $\exists$ has a 0-strategy in the game with starting position $(v[\vec{x}_m],\{v(y)\})$. The result then follows by inspection of the formula $\phi_{mn}$ and induction on $n$. To see this note that, applying the inductive hypothesis, we have $P,v \models \phi_{m(n+1)}(\vec{x}_m,y)$ if and only if, whatever move $\forall$ makes first in the game with starting position $(v[\vec{x}_m],\{v(y)\})$, $\exists$ can respond with a move $(a)$ in such a way that she has a winning strategy for the $n$-round game with starting position $(v[\vec{x}_m]\cup\{a\},\{v(y)\})$. But this is equivalent to saying that $\exists$ has a strategy for the $(n+1)$-round game with starting position $(v[\vec{x}_m],\{v(y)\})$. In other words, $P,v \models \phi_{m(n+1)}(\vec{x}_m,y)$ if and only if $\exists$ has an $(n+1)$-strategy for the game with starting position $(v[\vec{x}_m],\{v(y)\})$, as required.  
\end{proof}

Now, for each $n\in \omega$ we define sentences as follows.
\[\psi_n = \forall xy((x\not\leq y) \rightarrow \phi_{1n}(x,y)).\]

\begin{prop}\label{P:ax}
\begin{align*}P\models \psi_n \iff&\text{ for all $p,q\in P$, if $p\not\leq q$ then $\exists$ has an $n$-strategy}\\ &\text{for the game with starting position $(\{p\},\{q\})$.}\end{align*}
\end{prop}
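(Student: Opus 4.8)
The plan is to derive Proposition \ref{P:ax} almost immediately from Lemma \ref{L:ngames}, since $\psi_n$ is just the universal closure of the implication $(x\not\leq y)\rightarrow\phi_{1n}(x,y)$, and Lemma \ref{L:ngames} already translates $\phi_{1n}$ into a statement about $\exists$ having an $n$-strategy. So the proof is essentially an unwinding of the semantics of the universal quantifier.

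\begin{proof}
By the semantics of $\forall$ and $\rightarrow$, we have $P\models\psi_n$ if and only if for every assignment $v$ of variables to elements of $P$, whenever $P,v\models x\not\leq y$ we also have $P,v\models\phi_{1n}(x,y)$. Since the only free variables in the matrix are $x$ and $y$, this is the same as saying: for all $p,q\in P$, if $p\not\leq q$ then $P,v\models\phi_{1n}(x,y)$ for the assignment $v$ with $v(x)=p$ and $v(y)=q$. Now apply Lemma \ref{L:ngames} with $m=1$: for this $v$, $P,v\models\phi_{1n}(x,y)$ holds if and only if $\exists$ has an $n$-strategy for the game with starting position $(\{v(x)\},\{v(y)\})=(\{p\},\{q\})$. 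Combining these two equivalences yields the claim.
\end{proof}

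There is essentially no obstacle here; the only mild care needed is to note that the antecedent $x\not\leq y$ is atomic, so that ``$P,v\models x\not\leq y$'' means exactly ``$v(x)\not\leq v(y)$ in $P$'', and that ranging over all assignments $v$ is the same as ranging over all pairs $(p,q)=(v(x),v(y))\in P\times P$ because $x$ and $y$ are the only variables that matter. One could even fold this result into Lemma \ref{L:ngames} and skip the separate statement, but stating it separately is convenient for the next section, where the family $\{\psi_n\}_{n\in\omega}$, together with the known elementarity from Theorem \ref{T:elem}, will be assembled into the actual axiomatization.
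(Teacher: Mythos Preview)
Your proof is correct and takes the same approach as the paper, which simply says the result follows directly from Lemma \ref{L:ngames}; you have merely spelled out the routine unwinding of the semantics of $\psi_n$ that the paper leaves implicit.
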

\begin{proof}
This follows directly from Lemma \ref{L:ngames}.
\end{proof}

\begin{thm}\label{T:models}
Let $P$ be a poset. Then $P$ has a $3$-representation if and only if $P\models \psi_n$ for all $n\in \omega$.
\end{thm}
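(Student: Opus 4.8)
The plan is to bootstrap from the game-theoretic criterion of Proposition~\ref{P:game} and the axiom-to-game correspondence of Proposition~\ref{P:ax}, using the fact that the class of $3$-representable posets is \emph{already known} to be elementary (Theorem~\ref{T:elem}) in order to reduce the general case to the countable case via a downward Löwenheim--Skolem argument. The forward implication is immediate: if $P$ has a $3$-representation then, by Proposition~\ref{P:game}, for every $p\not\leq q$ in $P$ the player $\exists$ has an $\omega$-strategy in the game with starting position $(\{p\},\{q\})$; an $\omega$-strategy is in particular an $n$-strategy for every $n$, so Proposition~\ref{P:ax} gives $P\models\psi_n$ for all $n\in\omega$.

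For the converse I would first treat the case in which $P$ is countable. Assume $P\models\psi_n$ for all $n$ and fix $p\not\leq q$; by Proposition~\ref{P:ax}, $\exists$ has an $n$-strategy for the game from $(\{p\},\{q\})$ for every $n$. Fix the explicit strategy for $\forall$ built in the proof of Proposition~\ref{P:game} (always play the lowest-ranked good move with respect to a fixed injection $v:P\times P\to\bN$, and play $(p)$ forever once no good move remains). Played against this single fixed $\forall$-strategy, at each round $\exists$ has at most two options --- a genuine choice only at type-3 moves --- so the partial plays in which $\exists$ has not yet lost form a finitely branching tree; since $\exists$ has an $n$-strategy for every $n$ this tree has nodes of every depth, so by König's Lemma there is an infinite play, consistent with $\forall$'s strategy, in which $\exists$ never loses. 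As analysed in the proof of Proposition~\ref{P:game}, the set $U$ produced by such a play is a $3$-filter, and it contains $p$ but not $q$ by construction, so Theorem~\ref{T:rep} yields a $3$-representation of $P$.

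Finally I would remove the countability hypothesis. Suppose $P\models\psi_n$ for all $n$ but $P$ is not $3$-representable. By Theorem~\ref{T:elem} the class of $3$-representable posets equals $\mathrm{Mod}(\Sigma)$ for some first-order theory $\Sigma$ in the signature of posets, so $P\models\neg\sigma$ for some $\sigma\in\Sigma$. Taking a countable elementary substructure $P_0\preceq P$ (downward Löwenheim--Skolem), we obtain a poset $P_0$ with $P_0\models\psi_n$ for all $n$ and $P_0\models\neg\sigma$; by the countable case already proved, $P_0$ is $3$-representable, hence $P_0\models\Sigma$ and in particular $P_0\models\sigma$, a contradiction. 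Thus $P$ is $3$-representable, and together with the forward implication this proves the theorem. The conceptual crux is this last reduction, which is only available because elementarity is known in advance; the main technical obstacle is the preceding step for countable $P$, where one must pin down $\forall$'s strategy before König's Lemma can be used to glue the finite strategies of $\exists$ into an infinite one.
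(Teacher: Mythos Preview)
Your proof is correct and follows the same approach as the paper: the forward implication via Propositions~\ref{P:game} and~\ref{P:ax}, the countable converse via K\"onig's Lemma, and the uncountable case via downward L\"owenheim--Skolem together with Theorem~\ref{T:elem}. The only difference is organizational: the paper applies K\"onig to pass from $n$-strategies for all $n$ to a single $\omega$-strategy and then invokes Proposition~\ref{P:game} as a black box, whereas you fix $\forall$'s strategy from the proof of Proposition~\ref{P:game} and apply K\"onig directly to the finitely branching tree of $\exists$'s plays against it, obtaining the $3$-filter in one step---this merges the two steps rather than changing the argument.
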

\begin{proof}
If $P$ is $3$-representable then it has a separating set of $3$-filters, by Theorem \ref{T:rep}. So, given $p\not\leq q\in P$, $\exists$ can pick a $3$-filter, $\Gamma$, containing $p$ but not $q$, and win the $\omega$-game with starting position $(\{p\},\{q\})$ just by always picking elements of $\Gamma$. That $P\models \psi_n$ for all $n\in\omega$ then follows directly from Proposition \ref{P:ax}.  

For the converse, suppose first that $P$ is countable, and that $P\models \psi_n$ for all $n\in\omega$. Then, by Proposition \ref{P:ax}, whenever $p\not\leq q\in P$, $\exists$ has an $n$-strategy for the game with starting position $(\{p\},\{q\})$. It follows from K\"onig's Tree Lemma \cite{Kon26} that $\exists$ has an $\omega$-strategy for the game with starting position $(\{p\},\{q\})$. Thus $P$ is $3$-representable by Proposition \ref{P:game}.

To complete the proof we use a trick. If $P$ is uncountable then it has a countable elementary substructure, $P'$, by the downward L\"owenheim-Skolem Theorem. Since $P'\models \psi_n$ for all $n\in\omega$, we have just shown that $P'$ must be 3-representable. But the class of 3-representable posets is elementary, by Theorem \ref{T:elem}. So, as $P$ and $P'$ are elementarily equivalent, it follows that $P$ is $3$-representable too. 
\end{proof}

The axiomatization we have generated for the class of $3$-representable posets is clearly recursively enumerable, and so by Craig's trick  there is also a recursive axiomatization for this class. Note that Craig's trick should not be confused with the Interpolation Theorem (see e.g. \cite[Exercise 6.3.1]{Hodg93} for the trick we use here, in case it is unclear).

\section{Generalizing}\label{S:gen}
We can generalize the approach taken in Sections \ref{S:game} and \ref{S:ax} to $(\alpha,\beta)$-representations, for all $2\leq \alpha,\beta\leq\omega$. This requires a straightforward modification of the game rules.

\begin{defn}[$(\alpha,\beta)$-game]
Let $P$ be a poset, let $U,V\subseteq P$, and let $2\leq \alpha,\beta\leq\omega$. The $(\alpha,\beta)$-game with starting position $(U,V)$ is defined similarly to the game defined in Section \ref{S:game}. $\forall$ has the following choices:
\begin{enumerate}
\item If $b\geq a$, for some $a\in U$, then $\forall$ can play $(b)$.
\item If $A\subseteq U$ with $|A|<\alpha$, and if $\bw A$ is defined in $P$, then $\forall$ can play $A$.
\item If $B\subseteq P$ with $|B|<\beta$, and if $\bv B$ is defined in $P$ and is in $U$, then $\forall$ can play $B$.
\end{enumerate}

$\exists$ must respond according to the following rules:
\begin{enumerate}
\item $\exists$ must  add $b$ to $U$.
\item $\exists$ must add $\bw A$ to $U$.
\item $\exists$ must choose some $b\in B$ and add it to $U$.
\end{enumerate}

\end{defn}

\begin{prop}
Let $P$ be a poset, and let $2\leq \alpha,\beta\leq\omega$. Then, if $P$ is $(\alpha,\beta)$-representable, $\exists$ has an $\omega$-strategy in the $(\alpha,\beta)$-game with starting position $(\{p\},\{q\})$ for all $p\not\leq q\in P$. Moreover, if $P$ is countable then the converse is true.
\end{prop}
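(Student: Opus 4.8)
The plan is to mirror the proof of Proposition \ref{P:game}, making only the adjustments forced by allowing bounded—rather than binary—meets and joins. For the forward direction the argument is essentially unchanged: if $P$ is $(\alpha,\beta)$-representable, then by Theorem \ref{T:rep} for every $p\not\leq q$ there is an $(\alpha,\beta)$-filter $\Gamma$ with $p\in\Gamma$ and $q\notin\Gamma$. I claim that $\exists$ can win the $\omega$-version of the $(\alpha,\beta)$-game from $(\{p\},\{q\})$ by the rule ``always keep $U\subseteq\Gamma$''. This is maintained round by round: a type-1 move produces $b\geq a$ with $a\in U\subseteq\Gamma$, hence $b\in\Gamma$ since $\Gamma$ is an upset; a type-2 move produces $\bw A$ with $A\subseteq U\subseteq\Gamma$ and $|A|<\alpha$, hence $\bw A\in\Gamma$ by clause (1) of the definition of $(\alpha,\beta)$-filter; a type-3 move produces a set $B$ with $|B|<\beta$ and $\bv B\in U\subseteq\Gamma$, so by clause (2) (in contrapositive form) $B\cap\Gamma\neq\emptyset$, and $\exists$ picks some $b\in B\cap\Gamma$. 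Since $U$ stays inside $\Gamma$ and $q\notin\Gamma$, we always have $U\cap V=\emptyset$, so $\forall$ never wins.

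For the converse I assume $P$ countable and that $\exists$ has an $\omega$-strategy from $(\{p\},\{q\})$ for every $p\not\leq q$, and I construct a schedule for $\forall$ exactly as in Proposition \ref{P:game}. The key point is that the ``good move'' bookkeeping still works because $\alpha,\beta\leq\omega$: when $\alpha$ (resp.\ $\beta$) is finite the moves of type 2 (type 3) involve tuples from a countable set, and when $\alpha=\omega$ (or $\beta=\omega$) the relevant subsets $A$ (or $B$) are finite, so in all cases the collection of potential moves $\forall$ could ever make is countable and can be enumerated by a single injection $v$ into $\bN$ (into, say, the finite tuples over $P$). Declare a move \emph{good} if it would actually change $U$ — i.e.\ a type-1 move $(b)$ with $b\notin U$, a type-2 move $A$ with $\bw A\notin U$, a type-3 move $B$ with $B\cap U=\emptyset$ — and have $\forall$ always play the good move of least rank, defaulting to $(p)$ when none exists. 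Then each good move either gets played within finitely many rounds or ceases to be good within finitely many rounds, so in the limit the set $U$ produced satisfies the upset condition and clauses (1) and (2) of the $(\alpha,\beta)$-filter definition; since $\exists$ follows her $\omega$-strategy, $U\cap\{q\}=\emptyset$ throughout, so $q\notin U$ while $p\in U$. Thus $U$ is an $(\alpha,\beta)$-filter separating $p$ from $q$, and Theorem \ref{T:rep} gives $(\alpha,\beta)$-representability.

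I expect the only genuine obstacle to be a clean treatment of the ``finite vs.\ $\omega$'' case split in the counting argument: one must be careful that the set of moves $\forall$ can make is countable in every admissible pair $(\alpha,\beta)$, which relies essentially on the hypothesis $\alpha,\beta\leq\omega$ (if, say, $\alpha$ were uncountable there could be proper-class-many — or at least uncountably many — meet-moves and the enumeration would fail, consistent with the remark that uncountable cases are not elementary). A secondary routine point is verifying that a limit set $U$ obtained from $\forall$'s schedule genuinely satisfies clause (2) of the filter definition: if $\bv B\in U$ with $|B|<\beta$, then at the round where $\bv B$ entered $U$ the move $B$ is (or becomes) good unless $B\cap U\neq\emptyset$ already, and $\forall$'s fairness forces $B$ to be played, after which $\exists$'s response puts some $b\in B$ into $U$ permanently. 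Both points are direct adaptations of the corresponding steps in the proof of Proposition \ref{P:game}, so the proof is short.
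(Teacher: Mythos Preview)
Your proposal is correct and follows essentially the same approach as the paper: the paper's own proof simply says ``The proof of Proposition \ref{P:game} can easily be adapted'', and you have carried out precisely that adaptation, spelling out the forward direction via the filter $\Gamma$ and the converse via the fair-scheduling of $\forall$'s moves. The only extra remark the paper makes is that when $\alpha$ or $\beta$ equals $2$ the poset is automatically $(\alpha,\beta)$-representable (so $\exists$ trivially has an $\omega$-strategy), but this is an aside rather than a necessary ingredient, and your general argument covers that case anyway.
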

\begin{proof}
The proof of Proposition \ref{P:game} can easily be adapted. In the case where either $\alpha$ or $\beta$ is 2 then $P$ is always $(\alpha,\beta)$-representable, and consequently $\exists$ always has an $\omega$-strategy for the necessary starting positions.  
\end{proof}

Given $2\leq \alpha,\beta <\omega$, we can follow the approach of Section \ref{S:ax} and write down first-order sentences equivalent to $\exists$ having an $n$-strategy in the $(\alpha,\beta)$-game with starting position $(\{p\},\{q\})$ for all $p\not\leq q\in P$. A result analogous to Theorem \ref{T:models} can be proved for these using the same techniques as before, though a little more care is required when defining the axiomatizing first-order theory. The case where one or both of $\alpha,\beta$ is $\omega$ requires a slight modification of the approach.

We extend the notation from Section \ref{S:ax} as follows. 

\begin{itemize}
\item For all $1\leq k<\omega$ let $J_k(\vec{x}_k,y)$ and $M_k(\vec{x}_k,y)$ be first-order formulas that hold in a poset $P$ under assignment $v$ if and only if $v(y)=\bv v[\vec{x}_k]$ and $v(y)=\bw v[\vec{x}_k]$ respectively.
\item For all $1\leq k ,m <\omega$, let $C_{km}(\vec{x}_k,\vec{y}_m)$ be a first-order formula that holds in $P$ under assignment $v$ if and only if $v[\vec{y}_m]\subseteq v[\vec{x}_k]$.
\item For all $1\leq k ,m <\omega$, let $D_{km}(\vec{x}_k,\vec{y}_m)$ be a first-order formula that holds in $P$ under assignment $v$ if and only if $v[\vec{y}_m]\cap v[\vec{x}_k]=\emptyset$.
\end{itemize} 

For all $1\leq k, r, s<\omega$ define the following formulas.

\[\sigma_{k}(\vec{x}_k,c) = \exists z \big(C_k(\vec{x}_k,z)\wedge(z\leq c)\big).\]
\[\tau_{kr}(\vec{x}_k,\vec{a}_r,c) = C_{kr}(\vec{x}_k,\vec{a}_r)\wedge M_r(\vec{a}_r,c).\]
\[\rho_{ks}(\vec{x}_k,\vec{b}_s) = \exists z\big(C_k(\vec{x}_k,z)\wedge J_s(\vec{b}_s,z)\big).\]

Now, for all $1\leq k,r,s<\omega$ and for all $n\in\omega$ we define formulas $\phi_{krsn}$ using recursion.

\[\phi_{krs0}(\vec{x}_k,y) = D_k(\vec{x}_k,y).\] 

\begin{alignat*}{2}&\phi_{krs(n+1)}(\vec{x}_k,y)=\\ 
&\phantom{=} \forall \vec{a}_r\forall \vec{b}_s\forall c\Big(\big(\sigma_{k}(\vec{x}_k,c)\rightarrow \phi_{(k+1)rsn}(\vec{x}_k,c,y)\big)\\
&\phantom{=}\wedge\big(\tau_{kr}(\vec{x}_k,\vec{a}_r,c)\rightarrow \phi_{(k+1)rsn}(\vec{x}_k,c,y)\big)\\
&\phantom{=}\wedge \big(\rho_{ks}(\vec{x}_k,\vec{b}_s)\rightarrow  \bv_{i=1}^s \phi_{(k+1)rsn}(\vec{x}_k,b_i,y)\big)\Big).
\end{alignat*}

\begin{lemma}\label{L:ngames2}
Let $P$ be a poset, and let $v$ be an assignment of variables to elements of $P$. Then, for all $n\in\omega$ and $1\leq k, r,s <\omega$, 
\begin{align*}P, v\models \phi_{krsn}(\vec{x}_k,y)\iff&\text{ $\exists$ has an $n$-strategy for the $(\alpha,\beta)$-game with}\\ &\text{starting position }(v[\vec{x}_k],\{v(y)\})\\& \text{for all $2\leq\alpha \leq r+1$ and $2\leq \beta\leq s+1$}.
\end{align*}
\end{lemma}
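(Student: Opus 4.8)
The plan is to mirror the proof of Lemma \ref{L:ngames} essentially verbatim, now using the generalized formulas $\phi_{krsn}$ and the $(\alpha,\beta)$-game rules, and to carry out the induction on $n$ with $k,r,s$ as parameters. First I would handle the base case $n=0$: by definition $\phi_{krs0}(\vec{x}_k,y) = D_k(\vec{x}_k,y)$, which holds in $P$ under $v$ exactly when $v[\vec{x}_k]\cap\{v(y)\}=\emptyset$; by the properties of $D_k$ recalled just before the lemma (extended from Section \ref{S:ax}), this is precisely the condition for $\exists$ to have a $0$-strategy in the $(\alpha,\beta)$-game with starting position $(v[\vec{x}_k],\{v(y)\})$, and this is independent of $\alpha$ and $\beta$, so it holds for all $2\le\alpha\le r+1$ and $2\le\beta\le s+1$ simultaneously.

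For the inductive step, I would read off the three conjuncts of $\phi_{krs(n+1)}$ and match them to the three move types of $\forall$. The key point is the translation of the auxiliary formulas: $\sigma_k(\vec{x}_k,c)$ says $c$ dominates some element of the current set $U=v[\vec{x}_k]$, i.e. $(c)$ is a legal type-1 move with response $c$; $\tau_{kr}(\vec{x}_k,\vec{a}_r,c)$ says $\vec{a}_r$ is an $r$-tuple drawn from $U$ with meet $c$, i.e. (capturing all meets of size at most $r$, hence of size $<\alpha$ for any $\alpha\le r+1$) a legal type-2 move with forced response $c$; and $\rho_{ks}(\vec{x}_k,\vec{b}_s)$ says $\vec{b}_s$ is an $s$-tuple whose join lies in $U$, i.e. (capturing all joins of size at most $s$, hence of size $<\beta$ for any $\beta\le s+1$) a legal type-3 move, with $\exists$'s response being the disjunction over choosing some $b_i$. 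Applying the inductive hypothesis to the subformulas $\phi_{(k+1)rsn}$ — which quantify over the enlarged set obtained by appending the new element to $\vec{x}_k$ — we get that $P,v\models\phi_{krs(n+1)}(\vec{x}_k,y)$ iff, for every legal first move of $\forall$ (of any of the three types, and of any admissible cardinality), $\exists$ has a response after which she has an $n$-strategy for the resulting position, for all $2\le\alpha\le r+1$ and $2\le\beta\le s+1$. This is exactly the statement that $\exists$ has an $(n+1)$-strategy for the $(\alpha,\beta)$-game for all such $\alpha,\beta$.

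One technical wrinkle to treat carefully, rather than an obstacle: the $\phi$-formulas use fixed-arity tuples $\vec{a}_r$ and $\vec{b}_s$, whereas a legal $(\alpha,\beta)$-move of $\forall$ involves a set $A$ with $|A|<\alpha\le r+1$ (so $|A|\le r$), not necessarily exactly $r$ elements. This is handled by the fact that $C_{kr}$ and $J_s$ are stated in terms of the \emph{sets} $v[\vec{a}_r]$ and $v[\vec{b}_s]$, so repetitions among the components of the tuple are harmless: any move on a set of size $j\le r$ (resp. $j\le s$) is represented by a tuple with repeated entries, and conversely every tuple represents a set of size at most $r$ (resp. $s$). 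Thus ranging over all $r$-tuples $\vec{a}_r$ faithfully ranges over all meet-moves of cardinality $<\alpha$ for every $\alpha\le r+1$ at once, and similarly for joins; this is the reason a single formula $\phi_{krsn}$ captures the whole band $2\le\alpha\le r+1$, $2\le\beta\le s+1$, and it is the place where I would be most explicit.

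The main thing to get right is therefore not any single hard step but the bookkeeping of this uniformity: checking that the correspondence ``$r$-tuple up to repetition $\leftrightarrow$ subset of size $\le r$'' interacts correctly with the meet/join formulas, and that the quantifier $\forall c$ in $\phi_{krs(n+1)}$ correctly encodes ``$\exists$'s forced response'' in the meet case while the disjunction $\bv_{i=1}^s$ encodes ``$\exists$'s choice'' in the join case. Once that is verified, the induction closes with the same one-line argument as in Lemma \ref{L:ngames}: $P,v\models\phi_{krs(n+1)}(\vec{x}_k,y)$ iff whatever $\forall$ does first, $\exists$ can move to a position from which she has an $n$-strategy, iff $\exists$ has an $(n+1)$-strategy.
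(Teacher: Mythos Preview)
Your proposal is correct and follows exactly the approach the paper takes: the paper's proof simply says ``This is essentially the same as the proof for Lemma \ref{L:ngames}'', and your write-up is a faithful and more detailed unfolding of that same induction on $n$, including the tuple-with-repetitions observation that underlies the uniformity over $2\leq\alpha\leq r+1$ and $2\leq\beta\leq s+1$.
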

\begin{proof}
This is essentially the same as the proof for Lemma \ref{L:ngames}. 
\end{proof}

Now, for all $n\in\omega$, and for all $1\leq r,s<\omega$, define
\[\psi_{rsn} = \forall xy((x\not\leq y) \rightarrow \phi_{1rsn}(x,y)).\]

\begin{prop}\label{P:ax2}
Let $1\leq\alpha,\beta<\omega$.
\begin{align*}P\models \psi_{\alpha\beta n} \iff&\text{ for all $p,q\in P$, if $p\not\leq q$ then $\exists$ has an $n$-strategy}\\ &\text{for the $(\alpha+1,\beta+1)$-game with starting position $(\{p\},\{q\})$.}\end{align*}
\end{prop}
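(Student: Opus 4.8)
The plan is to deduce this directly from Lemma~\ref{L:ngames2} and the definition of $\psi_{rsn}$, in exactly the way that Proposition~\ref{P:ax} was deduced from Lemma~\ref{L:ngames}. First I would fix $1\leq\alpha,\beta<\omega$, a poset $P$, and unwind the definition $\psi_{\alpha\beta n}=\forall xy\big((x\not\leq y)\rightarrow\phi_{1\alpha\beta n}(x,y)\big)$: by the semantics of universal quantification and implication, $P\models\psi_{\alpha\beta n}$ holds if and only if for every assignment $v$ with $v(x)=p$, $v(y)=q$ and $p\not\leq q$ we have $P,v\models\phi_{1\alpha\beta n}(x,y)$.

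Next I would invoke Lemma~\ref{L:ngames2} with $k=1$, $r=\alpha$, $s=\beta$. Since $v[\vec{x}_1]=\{v(x)\}=\{p\}$, the lemma tells us that $P,v\models\phi_{1\alpha\beta n}(x,y)$ if and only if $\exists$ has an $n$-strategy for the $(\alpha',\beta')$-game with starting position $(\{p\},\{q\})$ for all $2\leq\alpha'\leq\alpha+1$ and $2\leq\beta'\leq\beta+1$. The largest such pair is $(\alpha+1,\beta+1)$. I would then observe that having an $n$-strategy in the $(\alpha+1,\beta+1)$-game already implies having one in every $(\alpha',\beta')$-game with $\alpha'\leq\alpha+1$ and $\beta'\leq\beta+1$: restricting the cardinality bounds only restricts the moves available to $\forall$, so any strategy that defeats $\forall$ for $n$ rounds in the larger game does so a fortiori in the smaller one. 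Hence the conjunction over all admissible $(\alpha',\beta')$ in Lemma~\ref{L:ngames2} collapses to the single condition at $(\alpha+1,\beta+1)$, giving $P,v\models\phi_{1\alpha\beta n}(x,y)$ iff $\exists$ has an $n$-strategy for the $(\alpha+1,\beta+1)$-game with starting position $(\{p\},\{q\})$.

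Combining the two equivalences yields: $P\models\psi_{\alpha\beta n}$ iff for all $p,q\in P$ with $p\not\leq q$, $\exists$ has an $n$-strategy for the $(\alpha+1,\beta+1)$-game with starting position $(\{p\},\{q\})$, which is the claim. There is no real obstacle here; this is a bookkeeping argument, and the only point that deserves an explicit sentence is the monotonicity remark that lets the family of conditions in Lemma~\ref{L:ngames2} be replaced by its maximal member. Accordingly the proof can simply read: this follows directly from Lemma~\ref{L:ngames2}, noting that an $n$-strategy for the $(\alpha+1,\beta+1)$-game yields one for every $(\alpha',\beta')$-game with $\alpha'\leq\alpha+1$ and $\beta'\leq\beta+1$, since decreasing the bounds only decreases $\forall$'s available moves.
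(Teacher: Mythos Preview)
Your proposal is correct and follows the same route as the paper, which simply records that the result follows directly from Lemma~\ref{L:ngames2}. Your explicit monotonicity remark---that an $n$-strategy for the $(\alpha+1,\beta+1)$-game yields one for each smaller $(\alpha',\beta')$-game---is exactly the observation the paper leaves implicit in passing from the ``for all $2\leq\alpha'\leq r+1$, $2\leq\beta'\leq s+1$'' conclusion of Lemma~\ref{L:ngames2} to the single-pair statement of the proposition.
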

\begin{proof}
This follows directly from Lemma \ref{L:ngames2}.
\end{proof}

\begin{thm}\label{T:models2}
Let $P$ be a poset, and let $2\leq\alpha,\beta<\omega$. Then $P$ has an $(\alpha,\beta)$-representation if and only if $P\models \psi_{(\alpha-1)(\beta-1) n}$ for all $n\in \omega$.
\end{thm}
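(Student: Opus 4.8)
The plan is to mirror the proof of Theorem~\ref{T:models} exactly, substituting the generalized ingredients for their $3$-representation analogues. First I would prove the forward direction: if $P$ is $(\alpha,\beta)$-representable then by Theorem~\ref{T:rep} it has a separating family of $(\alpha,\beta)$-filters, so given $p\not\leq q$ player $\exists$ picks a filter $\Gamma$ with $p\in\Gamma$, $q\notin\Gamma$, and plays forever inside $\Gamma$; the closure conditions in the definition of an $(\alpha,\beta)$-filter are precisely what guarantees her responses to $\forall$'s meet-moves and join-moves keep $U\subseteq\Gamma$, so $\forall$ never wins. Hence $\exists$ has an $\omega$-strategy, a fortiori an $n$-strategy for every $n$, so $P\models\psi_{(\alpha-1)(\beta-1)n}$ for all $n$ by Proposition~\ref{P:ax2}.

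For the converse I would first dispatch the countable case. Suppose $P$ is countable and $P\models\psi_{(\alpha-1)(\beta-1)n}$ for all $n$; then by Proposition~\ref{P:ax2}, for every $p\not\leq q$ player $\exists$ has an $n$-strategy in the $(\alpha,\beta)$-game with starting position $(\{p\},\{q\})$ for each $n\in\omega$. Because each round offers $\forall$ only finitely many move-types and (for countable $P$, with $\alpha,\beta<\omega$) the set of possible moves is countable while each move has finite branching for $\exists$, the tree of positions reachable under play where $\forall$ has not yet won is finitely branching at the relevant level in the sense needed for K\"onig's Tree Lemma \cite{Kon26}: having an $n$-strategy for all $n$ yields an $\omega$-strategy, exactly as in Theorem~\ref{T:models}. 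By the generalized version of Proposition~\ref{P:game} stated in Section~\ref{S:gen}, an $\omega$-strategy for all such starting positions implies $P$ is $(\alpha,\beta)$-representable.

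Finally I would remove the countability hypothesis by the same elementary-substructure trick. If $P$ is uncountable, the downward L\"owenheim--Skolem Theorem gives a countable elementary substructure $P'\preceq P$. Since the $\psi_{(\alpha-1)(\beta-1)n}$ are first-order sentences true in $P$, they hold in $P'$, so by the countable case $P'$ is $(\alpha,\beta)$-representable. The class of $(\alpha,\beta)$-representable posets is elementary by Theorem~\ref{T:elem}, and $P\equiv P'$, so $P$ is $(\alpha,\beta)$-representable as well.

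The main obstacle I anticipate is not in the logical skeleton, which is identical to the $3$-representation case, but in the correct bookkeeping of the finiteness conditions that make K\"onig's Tree Lemma applicable: one must check that with $2\le\alpha,\beta<\omega$ fixed, in a countable poset the positions form a tree in which, after pruning positions where $\forall$ has already won, the relevant nodes have only finitely many children that $\exists$ must distinguish between (she chooses from a set $B$ with $|B|<\beta<\omega$), and that $\forall$'s countably many possible moves can be enumerated so that the ``good move of lowest rank'' scheduling argument from Proposition~\ref{P:game} still forces every obstruction to be addressed. These points are genuinely routine adaptations of the arguments already given, which is why Lemma~\ref{L:ngames2} is asserted to have ``essentially the same'' proof as Lemma~\ref{L:ngames}; I would state the proof of Theorem~\ref{T:models2} as a short remark that the proof of Theorem~\ref{T:models} goes through \emph{mutatis mutandis}, citing Proposition~\ref{P:ax2} and the generalized Proposition~\ref{P:game} in place of their special cases.
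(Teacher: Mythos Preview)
Your proposal is correct and follows exactly the approach the paper takes: the paper's proof of Theorem~\ref{T:models2} consists of the single sentence ``This is essentially the same as the proof of Theorem~\ref{T:models},'' and your write-up is precisely that adaptation spelled out in detail. Your remarks on the K\"onig bookkeeping are accurate but more than the paper itself supplies.
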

\begin{proof}
This is essentially the same as the proof of Theorem \ref{T:models}.
\end{proof}
If we allow $\alpha$ and $\beta$ to take the value $\omega$ then we must modify the approach slightly, and use more axioms.
\begin{thm}\label{T:models3}
Let $P$ be a poset, and let $2\leq\alpha,\beta\leq\omega$. Then $P$ has an $(\alpha,\beta)$-representation if and only if $P\models \psi_{rs n}$ for all $n\in \omega$ and for all $1\leq r < \alpha$ and $1\leq s <\beta$.
\end{thm}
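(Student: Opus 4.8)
The plan is to mimic the proofs of Theorems \ref{T:models} and \ref{T:models2}; the one new ingredient is that the $(\alpha,\beta)$-game lets $\forall$ play finite meet- and join-sets of \emph{unbounded} size, which is why no single $\psi_{rsn}$ suffices and the whole family is needed. For the forward direction, suppose $P$ is $(\alpha,\beta)$-representable and fix $1\le r<\alpha$, $1\le s<\beta$, $n\in\omega$. By Theorem \ref{T:rep} there is a separating set of $(\alpha,\beta)$-filters, so given $p\not\leq q$ we may pick one, $\Gamma$, with $p\in\Gamma$ and $q\notin\Gamma$. Since $r+1\le\alpha$ and $s+1\le\beta$, this $\Gamma$ is in particular an $(r+1,s+1)$-filter, so $\exists$ can play the $(r+1,s+1)$-game from $(\{p\},\{q\})$ always staying inside $\Gamma$ (each forced response lies in $\Gamma$ precisely because $\Gamma$ is such a filter), and thereby never loses. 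Hence $\exists$ has an $n$-strategy for that game for every $n$, and Proposition \ref{P:ax2} gives $P\models\psi_{rsn}$.

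For the converse I would treat countable $P$ first. Assume $P$ is countable and $P\models\psi_{rsn}$ for all $n$ and all $1\le r<\alpha$, $1\le s<\beta$. Fix $p\not\leq q$; by Theorem \ref{T:rep} it suffices to build an $(\alpha,\beta)$-filter containing $p$ but not $q$. Fix an enumeration of all potential moves of $\forall$ -- a countable set, since such a move is coded by an element or a finite subset of $P$ together with a move type -- and let $\forall$ follow the canonical strategy of Proposition \ref{P:game}: in each round play the lowest-ranked \emph{good} move, or $(p)$ forever once there is none. Against this fixed strategy the tree $T$ of non-losing finite response-sequences for $\exists$ is finitely branching, since $\exists$'s response is forced for moves of types $1$ and $2$ and is a choice from a finite set for a move of type $3$.

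The crux is to show that $T$ has a node at every depth, i.e.\ that $\exists$ can survive $n$ rounds against the canonical strategy. For this, observe that $U$ gains at most one element per round, so an induction on $i$ shows that only finitely many positions $U$ are reachable in the first $n$ rounds against the canonical strategy; hence the moves $\forall$ can make in those rounds involve only finitely many finite sets. Let $r_n$ be the larger of $1$ and the largest size of a meet-set the canonical strategy can play in its first $n$ rounds, and define $s_n$ analogously for join-sets; then $r_n<\alpha$ and $s_n<\beta$ (these are legal moves), and every first-$n$-round play of the canonical strategy is a legal play of the $(r_n+1,s_n+1)$-game. So $P\models\psi_{r_n s_n n}$ yields, via Proposition \ref{P:ax2}, an $n$-strategy for $\exists$ in that game, which she can follow to survive $n$ rounds against the canonical strategy. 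K\"onig's Tree Lemma now produces an infinite branch of $T$, i.e.\ an $\omega$-play in which $\exists$ never loses; the union $U$ of its positions contains $p$, omits $q$, and is an $(\alpha,\beta)$-filter by the argument of Proposition \ref{P:game} -- every potential failure of the filter axioms for $U$ is a good move that the canonical strategy eventually plays, or else one that ceases to be available.

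Finally, if $P$ is uncountable I would take a countable elementary substructure $P'\preceq P$ by the downward L\"owenheim--Skolem Theorem; then $P'\models\psi_{rsn}$ for all the relevant $r,s,n$, so $P'$ is $(\alpha,\beta)$-representable by the countable case, and since the class of $(\alpha,\beta)$-representable posets is elementary (Theorem \ref{T:elem}) and $P\equiv P'$, so is $P$. I expect the only genuinely new work to be the counting argument bounding the set-sizes that can occur in short plays against the canonical strategy; everything else is routine adaptation of the earlier theorems.
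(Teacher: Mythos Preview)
Your proof is correct, but the route you take for the countable converse differs from the paper's. The paper proceeds in two steps: first, for each fixed $r<\alpha$ and $s<\beta$ it uses Proposition~\ref{P:ax2} together with K\"onig's Lemma (via determinacy of the open game) to conclude that $\exists$ has an $\omega$-strategy in every $(r+1,s+1)$-game; then it argues by contradiction that $\exists$ must also have an $\omega$-strategy in the full $(\alpha,\beta)$-game, since a winning $\forall$-strategy would force a win in boundedly many rounds using sets of bounded size, and hence already win some $(l,k)$-game with $l<\alpha$, $k<\beta$. You instead fix the canonical $\forall$-strategy in the $(\alpha,\beta)$-game from the outset and apply K\"onig once to the tree of $\exists$'s non-losing responses, with your finiteness/counting argument supplying the bounds $r_n,s_n$ needed to show this tree has nodes at every level. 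Your approach is more direct and avoids the separate determinacy step; the paper's is more modular, cleanly separating the lift from finite to $\omega$-strategies from the lift in the parameters $r,s$. The forward direction and the uncountable case are handled the same way in both.
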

\begin{proof}
If $P$ is $(\alpha,\beta)$-representable then $\exists$ has an $\omega$-strategy for all the necessary games, as in the proof of Theorem \ref{T:models}, and so one direction of the result follows from Proposition \ref{P:ax2}.

For the converse, we follow the pattern of the proof of Theorem \ref{T:models}, and assume first that $P$ is countable. By Proposition \ref{P:ax2} and K\"onig's Tree Lemma we know that, for all $2\leq r< \alpha$ and $2\leq s<\beta$, $\exists$ has an $\omega$-strategy for the $(r,s)$-game with starting position $(\{p\},\{q\})$ whenever $p\not\leq q\in P$.

It follows that $\exists$ has an $\omega$-strategy for the $(\alpha,\beta)$-game with starting position $(\{p\},\{q\})$ whenever $p\not\leq q\in P$. This is also a consequence of the Tree Lemma. If $\exists$ does not have an $\omega$-strategy for some $p\not\leq q$, then there must be a bounded game tree for this starting position, and so $\forall$ has a strategy that forces a win within a finite number of rounds. But then the size of sets he plays in this winning strategy must be bounded. It follows that there are $l<\alpha$ and $k<\beta$ such that $\forall$ can force a win in the $(l,k)$-game with starting position $(\{p\},\{q\})$. But this is a contradiction as we have already proved that $\exists$ has an $\omega$-strategy in this game. 

From here the proof proceeds exactly like that of Theorem \ref{T:models}.  
\end{proof}

\section{Lattices, semilattices and choice}\label{S:LSC}
As discussed in the introduction, the cases where $P$ is a lattice or a semilattice are well understood. Using the terminology of Section \ref{S:PosRep}, a lattice is $(\alpha,\beta)$-representable for all $3\leq \alpha,\beta\leq\omega$ if and only if it is distributive, and a meet semilattice is $(\alpha,\beta)$-representable for given $2\leq \beta \leq \omega$ and all $2\leq \alpha\leq\omega$ if and only if it is $\beta$-distributive as defined in Definition \ref{D:dist} below.

It is natural to ask how the known axiomatizations for distributive lattices and $n$-distributive semilattices relate to the axiomatizations produced using the game technique described here. Proposition \ref{P:lattice} provides an answer to this.

\begin{defn}\label{D:dist}
Let $S$ be a meet semilattice, and let $2\leq \beta \leq\omega$. Then $S$ is \textup{\textbf{$\beta$-distributive}} if, for all $m<\beta$, and for all and $x, y_1,\ldots ,y_m\in S$, whenever 
\[x\wedge (y_1\vee\ldots\vee y_m)\]
exists in $S$,  so too does
\[(x\wedge y_1)\vee\ldots\vee(x\wedge y_m),\]
and the two are equal.  
\end{defn}

\begin{prop}\label{P:lattice}
Let $S$ be a meet semilattice, and let $2\leq k\leq\omega$. Then the following are equivalent:
\begin{enumerate}[(1)]
\item $\exists$ has an $\omega$-strategy in the $(m,k)$-game with starting position $(\{a\},\{b\})$ for all $a\not\leq b\in S$, and for all $2\leq m\leq \omega$.
\item $\exists$ has a 5-strategy in the $(3,k)$-game with starting position $(\{a\},\{b\})$ for all $a\not\leq b\in S$.
\item $S$ is $k$-distributive.
\end{enumerate}
\end{prop}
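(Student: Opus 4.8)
The plan is to prove the cycle of implications $(1)\Rightarrow(2)\Rightarrow(3)\Rightarrow(1)$, where $(1)\Rightarrow(2)$ is immediate from the definitions (an $\omega$-strategy is in particular a $5$-strategy, and the $(3,k)$-game is the $(m,k)$-game with $m=3$). So the content is in $(2)\Rightarrow(3)$ and $(3)\Rightarrow(1)$.

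For $(3)\Rightarrow(1)$, the natural route is via the representation theory already set up. If $S$ is $k$-distributive, I would check directly using Balbes-style $k$-filters that $S$ is $(m,k)$-representable for every $2\le m\le\omega$: indeed, for a meet semilattice the only obstruction to building a separating family of $(\alpha,\beta)$-filters concerns the interaction of the (total) meet operation with the existing joins of size $<\beta$, and this is exactly what $k$-distributivity controls; the value of $\alpha$ is irrelevant because arbitrary meets already exist and are preserved by any semilattice filter. Concretely, given $a\not\le b$, one uses a Zorn's-lemma argument to find a filter $\Gamma$ (closed under all meets, hence in particular meets of size $<m$) that is maximal with respect to containing $a$ and avoiding $b^\downarrow$; $k$-distributivity is what guarantees $\Gamma$ can additionally be taken to be prime with respect to joins of size $<k$, i.e. an $(m,k)$-filter. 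Then Theorem~\ref{T:rep} gives $(m,k)$-representability, and the earlier Proposition (generalizing Proposition~\ref{P:game}) gives that $\exists$ has an $\omega$-strategy in the $(m,k)$-game for all the required starting positions. This direction requires a choice principle, consistent with the paper's standing assumptions.

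For $(2)\Rightarrow(3)$, I would argue contrapositively and extract a failure of $k$-distributivity from a losing position. Suppose $S$ is not $k$-distributive: there are $m<k$ and $x,y_1,\dots,y_m\in S$ such that $x\wedge(y_1\vee\cdots\vee y_m)$ exists but $(x\wedge y_1)\vee\cdots\vee(x\wedge y_m)$ either fails to exist or is strictly below $x\wedge(y_1\vee\cdots\vee y_m)$. Write $w = x\wedge(\bv_{i} y_i)$ and $z_i = x\wedge y_i$. Each $z_i\le w$, so $w$ is an upper bound of $\{z_1,\dots,z_m\}$; if the join existed and equalled $w$ we'd have $k$-distributivity, so there must be an upper bound $u$ of $\{z_i\}$ with $w\not\le u$ (either because the join is some $u'\ne w$ with $w\not\le u'$, or because there is simply no least upper bound). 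I would then exhibit a short winning strategy for $\forall$ in the $(3,k)$-game starting from $(\{w\},\{u\})$: $\forall$ first plays the type-3 move $\{y_1,\dots,y_m\}$ (legal since $\bv y_i$ is $\ge w$... actually $w\le \bv y_i$ fails in general) — here I need to be careful about which element sits in $U$. The cleaner choice is to start the analysis from the element $\bv_i y_i$ or from $x$; since $w = x\wedge \bv_i y_i\in U$ forces nothing directly, the right play is: from $w\in U$, $\forall$ cannot immediately decompose, so instead one starts the game at $(\{x\},\ldots)$ reached after an initial segment, or one observes that $w\le \bv_i y_i$ need not hold but $w\le x$ and $w\le\bv_i y_i$ \emph{does} hold by definition of meet — so $\bv_i y_i\in U$ is forced if $w\in U$ only via a type-1 move, which is legal since $\bv_i y_i\ge w$. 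Good: $\forall$ plays type-1 move $(\bv_i y_i)$, forcing $\bv_i y_i$ into $U$; then $\forall$ plays the type-3 move $\{y_1,\dots,y_m\}$ (legal, $|\{y_i\}|=m<k$ and $\bv y_i\in U$), so $\exists$ must add some $y_j$ to $U$; then $\forall$ plays the type-2 (meet) move $\{x,y_j\}$, forcing $z_j = x\wedge y_j$ into $U$; finally, since $z_j\le u$ and $u\in V$, $\forall$ would need $u\in U$ — but type-2 moves don't put $u$ in, so instead $V$ should be chosen as $\{z_j : j\}^\uparrow$'s... This is the delicate point: $V$ is a fixed set, and $\forall$ doesn't know which $y_j$ $\exists$ will pick. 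The fix is to take $V$ to contain a single element $u$ that is simultaneously an upper bound of \emph{all} the $z_i$ with $w\not\le u$, which exists exactly when the join $\bv z_i$ fails to be $w$; then whichever $y_j$ $\exists$ picks, $z_j\le u$, and $\forall$ wins by a type-1 move $(u)$ off of $z_j\in U$, placing $u\in U\cap V$. Counting rounds: type-1, type-3, type-2, type-1 — four moves, so $\exists$ loses by round $5$, contradicting the assumed $5$-strategy. One must also handle the starting position: the Proposition is stated for $(\{a\},\{b\})$ with $a\not\le b$, so I take $a=w$, $b=u$, and note $w\not\le u$ by construction.

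The main obstacle will be the bookkeeping in $(2)\Rightarrow(3)$: precisely pinning down the element $u$ (a common upper bound of the $x\wedge y_i$ not above $x\wedge\bv y_i$) in both the "join exists but is wrong" and the "join does not exist" cases, verifying $w\not\le u$, and checking that every move in $\forall$'s four-move sequence is legal under the $(3,k)$-game rules (in particular that the meet move uses a set of size $2<3$ and the join move a set of size $m<k$, which needs $k\ge 3$; the edge case $k=2$ makes $k$-distributivity vacuous and every semilattice is $(m,2)$-representable, so all three conditions hold trivially and should be dispatched separately at the start). The round count must be done carefully to land at $5$ and not more, which is why the starting-position element is taken to be $w$ rather than $x$, saving the round that would otherwise be spent forcing $\bv y_i$ from $x$ via a meet.
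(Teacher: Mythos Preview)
Your overall plan matches the paper's: $(1)\Rightarrow(2)$ trivial, $(2)\Rightarrow(3)$ by contraposition via an explicit winning strategy for $\forall$, and $(3)\Rightarrow(1)$ via a Balbes-style prime filter (the paper simply invokes \cite[Theorem 2.2]{Bal69} rather than sketching the Zorn argument, but the content is the same). Your handling of the $k=2$ edge case and the choice of starting pair $a=w=x\wedge\bigl(\bigvee_i y_i\bigr)$, $b=u$ with $u$ a common upper bound of the $x\wedge y_i$ not above $w$, is exactly right.

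There is, however, a genuine error in your four-move $\forall$-strategy. After the type-1 move $\bigl(\bigvee_i y_i\bigr)$ and the type-3 move $\{y_1,\ldots,y_m\}$, the set $U$ is $\{w,\bigvee_i y_i,y_j\}$; it does \emph{not} contain $x$. The type-2 rule requires both elements of the pair to lie in $U$, so your move $\{x,y_j\}$ is illegal. The paper fixes this by inserting an extra type-1 move $(x)$ (legal since $x\ge w$), giving the five-move sequence $(x)$, $\bigl(\bigvee_i y_i\bigr)$, $\{y_1,\ldots,y_m\}$, $\{x,y_j\}$, $(u)$, after which $\forall$ wins at the start of round $5$ and $\exists$ has no $5$-strategy. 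An alternative that rescues your four-move count is to replace the illegal $\{x,y_j\}$ by the legal $\{w,y_j\}$: since $y_j\le\bigvee_i y_i$ one has $w\wedge y_j=x\wedge\bigl(\bigvee_i y_i\bigr)\wedge y_j=x\wedge y_j=z_j$, and the rest goes through. Either way the contradiction with (2) follows, but your claim that starting from $w$ ``saves a round'' is unjustified as written, since starting at $w$ does not by itself place $x$ in $U$.
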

\begin{proof}\mbox{}
\begin{enumerate}
\item[](1)$\implies$(2): Automatic.

\item[](2)$\implies$(3): If $S$ is not $k$-distributive then $k>2$, and there must be $m< k$ and $x,y_1,\ldots,y_m\in S$ such that $x\wedge (y_1\vee\ldots\vee y_m)$ is defined and, either $(x\wedge y_1)\vee\ldots\vee(x\wedge y_m)$ is not defined, or \[(x\wedge y_1)\vee\ldots\vee(x\wedge y_m)< x\wedge (y_1\vee\ldots\vee y_m).\]

In either case there must be $z\in P$ with $z\geq (x\wedge y_i)$ for all $i\in\{1,\ldots,m\}$ and $x\wedge (y_1\vee\ldots\vee y_m)\not\leq z$. Thus $\forall$ can force a win in the game with starting position $(\{x\wedge (y_1\vee\ldots\vee y_m)\},\{z\})$ by the start of the 5th round by playing according to the following strategy. First he plays the type 1 moves $(x)$, and $(y_1\vee\ldots\vee y_m)$, and $\exists$ must respond by adding these elements to $U$. Then $\forall$ plays the type 3 move $\{y_1,\ldots,y_m\}$, and $\exists$ must respond by adding $y_i$ to $U$ for some $i$. Then $\forall$ plays the type 2 move $\{x,y_i\}$, and $\exists$ must respond by adding $x\wedge y_i$ to $U$. Finally, $\forall$ plays the type 1 move $(z)$, which forces $\exists$ to add $z$ to $U$, and thus $\forall$ wins at start of the 5th round. 

\item[](3)$\implies$(1): Let $S$ be $k$-distributive, and let $a\not\leq b\in S$. Suppose $\exists$ does not have an $\omega$-strategy for the game with starting position $(\{a\},\{b\})$. Then, in particular there must be no $(\omega,k)$-filter containing $a$ and not $b$, as otherwise this filter would provide an $\omega$-strategy for $\exists$. But a version of the Prime Ideal Theorem for semilattices is a consequence of the Axiom of Choice (AC) when $S$ is $k$-distributive (see \cite[Theorem 2.2]{Bal69}), and so it follows that AC is false. Thus we have disproved AC in ZFC, which is of course impossible if ZFC is consistent. 
\end{enumerate}     
\end{proof}

Proposition \ref{P:lattice} and Theorem \ref{T:models2} produce a finite axiomatization for the class of $(\omega, \beta)$-representable meet semilattices whenever $2\leq\beta<\omega$. Note that while we have shown that a meet semilattice is $\omega$-distributive if and only if $\exists$ has a 5-strategy in the associated $(3,\omega)$-games, this does not produce a finite axiomatization for the class of $(\omega, \omega)$-representable semilattices, which we know does not exist \cite{Kea97}. This is because we must use Theorem \ref{T:models3}, and here an infinite number of axioms are needed even to characterize the existence of a 5-strategy for $\exists$.   

\begin{cor}
Let $L$ be a lattice. Then the following are equivalent:
\begin{enumerate}[(1)]
\item $\exists$ has an $\omega$-strategy in the $(m,k)$-game with starting position $(\{a\},\{b\})$ for all $a\not\leq b\in L$, and for all $3\leq m,k\leq \omega$.
\item $\exists$ has a 5-strategy in the $(3,3)$-game with starting position $(\{a\},\{b\})$ for all $a\not\leq b\in L$.
\item $L$ is distributive.
\end{enumerate}
\end{cor}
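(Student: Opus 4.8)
The plan is to derive the Corollary directly from Proposition \ref{P:lattice}, applied to $L$ viewed as a meet semilattice, together with the elementary observation that for a lattice the $k$-distributivity conditions of Definition \ref{D:dist} collapse for all $k\geq 3$ to ordinary distributivity. No new game-theoretic argument is needed; everything reduces to bookkeeping over the parameter $k$.

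For (1)$\implies$(2) I would simply specialize: taking $m=k=3$ in (1) gives that $\exists$ has an $\omega$-strategy in the $(3,3)$-game from $(\{a\},\{b\})$ for every $a\not\leq b\in L$, and an $\omega$-strategy is in particular a $5$-strategy, which is (2). For (2)$\implies$(3), since a lattice is a meet semilattice, Proposition \ref{P:lattice} applies with $k=3$, and its equivalence (2)$\Leftrightarrow$(3) shows that (2) of the Corollary is equivalent to $L$ being $3$-distributive. I would then unwind Definition \ref{D:dist} for $\beta=3$: the instances with $m\leq 1$ are trivial, and for $m=0$ the identity degenerates to $x\wedge 0=0$ when a bottom exists, so the only substantive case is the binary law $x\wedge(y_1\vee y_2)=(x\wedge y_1)\vee(x\wedge y_2)$; since every binary join exists in $L$, this is exactly the distributive law. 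Hence (2) forces $L$ to be distributive.

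For (3)$\implies$(1), I would note that in a distributive lattice the binary distributive law propagates, by an easy induction on the number of joinands, to $x\wedge(y_1\vee\dots\vee y_m)=(x\wedge y_1)\vee\dots\vee(x\wedge y_m)$ for every finite $m$; as every finite join exists in $L$, this says $L$ is $k$-distributive for all $2\leq k\leq\omega$. Fixing any $k$ with $3\leq k\leq\omega$ and applying the implication (3)$\implies$(1) of Proposition \ref{P:lattice}, I obtain that $\exists$ has an $\omega$-strategy in the $(m,k)$-game from $(\{a\},\{b\})$ for every $a\not\leq b\in L$ and every $2\leq m\leq\omega$; restricting to $m\geq 3$ and letting $k$ range over $\{3,4,\dots,\omega\}$ yields (1).

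The only content beyond citing Proposition \ref{P:lattice} is the fact that $k$-distributivity is independent of $k$ (for $k\geq 3$) in a lattice, and I expect this to pose no real obstacle; the one thing worth stating carefully is the treatment of the degenerate small cases ($m\leq 1$, and $m=0$ when there is no bottom) and of the value $k=\omega$, where "$k$-distributive" just means the law holds for all finite $m$.
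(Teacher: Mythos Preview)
Your proposal is correct and takes essentially the same approach as the paper, which simply says the corollary follows straightforwardly from Proposition~\ref{P:lattice}. You have just spelled out in detail the one point the paper leaves implicit, namely that in a lattice $k$-distributivity for any $k\geq 3$ coincides with ordinary distributivity.
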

\begin{proof}
This follows straightforwardly from Proposition \ref{P:lattice}.
\end{proof}

Note that we refer to AC in the proof of Proposition \ref{P:lattice}, but the proof does not depend on its assumption, so this proposition is a theorem of ZF. 

Of course, in general, to deduce the existence of an $\omega$-strategy from the existence of an $n$-strategy for all $n$ we require some choice principle, as this is necessary in the proof of K\"onig's Tree Lemma. The axiom of countable choice for finite sets, for example, is always sufficient. In the proof of Theorem \ref{T:models}, the Tree Lemma is applied to a countable set, so the necessary version is a theorem of ZF (indeed, the full power of ZF is not needed - see \cite[Chapter III.7]{Sim09}). However, to complete the proof of Theorem \ref{T:models} we rely implicitly on the Keisler-Shelah Theorem.

The following result however can be proved without any choice.

\begin{prop}
Let $2\leq \alpha,\beta\leq \omega$. Then there is a recursive set of first-order sentences $\Theta$, such that whenever $P$ is a countable poset we have 
\[P\models \Theta\iff P\text{ is }(\alpha,\beta)\text{-representable}.\]  
\end{prop}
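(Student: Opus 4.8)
The plan is to take the recursively enumerable axiomatization already produced --- namely the sentences $\psi_{rsn}$ for $n\in\omega$ and $1\leq r<\alpha$, $1\leq s<\beta$ --- and observe that, \emph{for countable posets}, Theorem \ref{T:models3} already establishes the required equivalence. Indeed, re-examining the proof of Theorem \ref{T:models3}, the only steps used to pass from ``$P\models\psi_{rsn}$ for all relevant $r,s,n$'' to ``$P$ is $(\alpha,\beta)$-representable'' are Proposition \ref{P:ax2}, K\"onig's Tree Lemma, and Proposition \ref{P:game} (via Theorem \ref{T:rep}); the appeal to the Keisler--Shelah Theorem and the downward L\"owenheim--Skolem Theorem occurs only in the \emph{uncountable} case, which we are now explicitly excluding. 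So the first step is to set $\Theta_0 = \{\psi_{rsn} : n\in\omega,\ 1\leq r<\alpha,\ 1\leq s<\beta\}$ and check that $\Theta_0$ does the job for countable $P$ using only ZF-provable instances of the Tree Lemma.

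The second step is to address the two choice-related worries. First, K\"onig's Tree Lemma: the game trees in question are finitely branching only after we fix the move $\forall$ is considering, but the number of \emph{possible} moves of $\forall$ from a given position can be countably infinite (there are countably many $A\subseteq U$ with $\bigwedge A$ defined, etc.) when $\alpha$ or $\beta$ is $\omega$. However, the relevant tree --- the tree of positions reachable under \emph{all} plays --- when $P$ is countable is a countably branching tree all of whose nodes have only countably many successors, and the statement ``a countably branching tree with all levels finite... '' is not quite what we need; rather we need ``an infinite tree in which every node has countably many children, and in which for each $n$ there is a node at level $n$ from which $\exists$ wins the remaining $n$-round game, has an infinite branch along which $\exists$ wins''. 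This form of K\"onig's Lemma for $\omega$-branching trees is exactly $\mathsf{AC}_\omega$ in general, but for a \emph{countable} poset the whole tree is a countable set, and K\"onig's Lemma for a countable tree (even $\omega$-branching) is provable in ZF because we can well-order the nodes and always pick the least witness. The second worry is Craig's trick: $\Theta_0$ is recursively enumerable but perhaps not recursive, so I would pass to $\Theta = \{(\theta_0\wedge\dots\wedge\theta_0) : \theta_j \text{ is the } j\text{-th sentence enumerated by } \Theta_0,\ \text{repeated } j+1 \text{ times in conjunction}\}$, the standard Craig padding, which is recursive, logically equivalent to $\Theta_0$, and whose construction is patently effective and choice-free. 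Then $P\models\Theta \iff P\models\Theta_0 \iff P$ is $(\alpha,\beta)$-representable for countable $P$.

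I should also handle the degenerate cases $\alpha=2$ or $\beta=2$ uniformly: there $\Theta$ may as well be empty (or the single axiom expressing ``$\leq$ is a partial order'', which is recursive), since every poset is $(2,\beta)$- and $(\alpha,2)$-representable by Theorem \ref{T:fin}, and this is certainly ZF-provable. For $3\leq\alpha,\beta$ the construction above applies verbatim. Collecting these, $\Theta$ is recursive, and for every countable poset $P$ we have $P\models\Theta$ iff $P$ is $(\alpha,\beta)$-representable, with every implication used being a theorem of ZF.

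The step I expect to be the main obstacle --- or at least the one demanding the most care to state honestly --- is verifying that the instance of K\"onig's Tree Lemma invoked in the proof of Theorem \ref{T:models3} is genuinely choice-free when restricted to countable posets. The subtlety is that the game tree is $\omega$-branching rather than finitely branching, so one cannot simply cite the textbook (finitely-branching) K\"onig Lemma; one must instead argue that since $P$ is countable the entire set of positions is countable, fix an enumeration, and recursively extract the branch by always taking the least child from which $\exists$ still has an $n$-strategy for every $n$ --- an argument that uses no choice because the choices are definable from the fixed enumeration. Once that point is made carefully (and one notes that ``$\exists$ has an $n$-strategy from position $P'$'' is, by Proposition \ref{P:ax2} and Lemma \ref{L:ngames2}, a first-order property, hence decidable along the enumeration given an oracle for $\leq$, though decidability is not even needed --- only that it is a well-defined predicate), the rest is bookkeeping. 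The contradiction argument at the end of the proof of Theorem \ref{T:models3} (bounding the sizes of sets $\forall$ plays in a hypothetical finite winning strategy) is finitary and needs no comment.
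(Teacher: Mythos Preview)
Your proposal is correct and follows essentially the same route as the paper: take $\Theta$ to be the $\psi_{rsn}$ axioms from Theorems~\ref{T:models2}/\ref{T:models3}, observe that for countable $P$ the L\"owenheim--Skolem/Keisler--Shelah step is unnecessary, and check that the Tree Lemma step is choice-free. You are more explicit than the paper on two points --- you invoke Craig's trick to guarantee recursiveness (the paper simply asserts the axiom set is recursive), and you spell out why the Tree Lemma step avoids choice --- but these are elaborations, not a different argument.

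One caution about your exposition: the sentence ``K\"onig's Lemma for a countable tree (even $\omega$-branching) is provable in ZF'' is false as a general statement about trees (a countable, infinitely-branching tree can be infinite yet have no infinite branch). What actually makes the argument go through is not a general $\omega$-branching K\"onig Lemma but the fact that \emph{$\exists$'s} responses are always from a finite set (since $|B|<\beta\leq\omega$): hence if every response from a position is $n$-bad for some $n$, there is a common bound, and so any not-bad position has a not-bad successor. Your final sketch (``fix an enumeration, always take the least child from which $\exists$ still has an $n$-strategy for every $n$'') is exactly the right construction and is choice-free for countable $P$; just make sure the justification you give for why such a child exists is the finiteness of $\exists$'s move set, not a spurious $\omega$-branching tree lemma.
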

\begin{proof}
Let $\Theta$ be recursive set of sentences used in either Theorem \ref{T:models2} or \ref{T:models3}, depending on whether either $\alpha$ or $\beta$ is $\omega$. We can proceed more or less as in the proofs of these theorems. The points of departure being that, as $P$ is countable, we only need the countable version of the Tree Lemma, which, as mentioned previously, is a theorem of ZF, and we don't need to use the L\"owenheim-Skolem + Keisler-Shelah trick.  
\end{proof}

\section*{Acknowledgement}
The author would like to thank Robin Hirsch for inviting him to UCL for the visit during which this paper was written, and for various valuable discussions during his stay. The author would also like to thank the Department of Computer Science at UCL for hosting him.

%\bibliography{../../../references}{}
%\bibliographystyle{abbrv} 

\end{document}